\newdimen\plusheight
\def\+{\;\lower\plusheight\hbox{$+$}\;}
\newdimen\minusheight
\def\-{\;\lower\minusheight\hbox{$-$}\;}
\newdimen\cdotsheight
\def\cds{\lower\cdotsheight\hbox{$\cdots$}}
\def\leqalignno#1{\displ@y \tabskip\z@ plus\@ne fil
  \halign to\displaywidth{\hfil$\@lign\displaystyle{##}$\tabskip\z@skip
    &$\@lign\displaystyle{{}##}$\hfil\tabskip\z@ plus\@ne fil
    &\kern-\displaywidth\rlap{$\@lign\hbox{\rm##}$}\tabskip\displaywidth\crcr
    #1\crcr}}
\newcommand{\eb}{\begin{equation}}
\newcommand{\ee}{\end{equation}}
\newcommand{\df}{\dfrac}
\newcommand{\tf}{\tfrac}
\renewcommand{\Re}{\operatorname{Re}}
\renewcommand{\l}{\lambda}
\renewcommand{\Re}{\textup{Re}}
\renewcommand{\(}{\left\(}
\renewcommand{\)}{\right\)}
\renewcommand{\[}{\left\[}
\renewcommand{\]}{\right\]}
\numberwithin{equation}{section}
 \theoremstyle{plain}
\newtheorem{theorem}{Theorem}[section]
\newtheorem{lemma}[theorem]{Lemma}
\newtheorem{remark}[theorem]{Remark}
\begin{document}
\title[Arithmetical identities]
{A Class of Identities Associated with Dirichlet Series Satisfying Hecke's Functional Equation}
\author{Bruce C.~Berndt, Atul Dixit, Rajat Gupta, Alexandru Zaharescu}
\thanks{2020 \textit{Mathematics Subject Classification.} Primary 33C10; Secondary 11M06, 11N99.\\
\textit{Keywords and phrases.} Bessel functions, functional equations, classical arithmetic functions}
\address{Department of Mathematics, University of Illinois, 1409 West Green
Street, Urbana, IL 61801, USA} \email{berndt@illinois.edu}
\address{Department of Mathematics, Indian Institute of Technology Gandhinagar, Palaj, Gandhinagar 382355, Gujarat, India}\email{adixit@iitgn.ac.in}
\address{Department of Mathematics, Indian Institute of Technology Gandhinagar, Palaj, Gandhinagar 382355, Gujarat, India}\email{rajat\_gupta@iitgn.ac.in}
\address{Department of Mathematics, University of Illinois, 1409 West Green
Street, Urbana, IL 61801, USA; Institute of Mathematics of the Romanian
Academy, P.O.~Box 1-764, Bucharest RO-70700, Romania}
\email{zaharesc@illinois.edu}
\maketitle

\begin{abstract}
    We consider two sequences $a(n)$ and $b(n)$, $1\leq n<\infty$, generated by Dirichlet series of the forms
    $$\sum_{n=1}^{\infty}\df{a(n)}{\lambda_n^{s}}\qquad\text{and}\qquad
    \sum_{n=1}^{\infty}\df{b(n)}{\mu_n^{s}},$$
        satisfying a familiar functional equation involving the gamma function $\Gamma(s)$. A general identity is established.  Appearing on one side is an infinite series involving $a(n)$ and modified Bessel functions $K_{\nu}$, wherein on the other side is an infinite series involving $b(n)$ that is an analogue of the Hurwitz zeta function. Seven special cases, including $a(n)=\tau(n)$ and $a(n)=r_k(n)$, are examined, where $\tau(n)$ is Ramanujan's arithmetical function and $r_k(n)$ denotes the number of representations of $n$ as a sum of $k$ squares.  Most of the six special cases appear to be new.  
    \end{abstract}

\section{Introduction}

Our goal is to establish a new set of identities involving arithmetical functions whose generating functions are Dirichlet series satisfying Hecke's functional equation.  Our general theorem involves the modified Bessel function $K_{\nu}(z)$.  Even in the special case \cite[pp.~79, 80, no.~(13)]{watson}
\begin{equation}\label{K}
K_{1/2}(z)=\sqrt{\df{\pi}{2z}}e^{-z},
\end{equation}
most special cases are new.

We employ the setting of K.~Chandrasekharan and R.~Narasimhan from their paper \cite{cn1}. Throughout our paper, $\sigma=\Re(s)$.
Let $a(n)$ and $b(n)$, $1\leq n<\infty$, be two sequences of complex numbers, not identically 0.  Set
\begin{equation}\label{18}
\varphi(s):=\sum_{n=1}^{\infty}\df{a(n)}{\lambda_n^s}, \quad \sigma>\sigma_a; \qquad
\psi(s):=\sum_{n=1}^{\infty}\df{b(n)}{\mu_n^s}, \quad \sigma>\sigma_a^*,
\end{equation}
where  $\{\l_n\}$ and $\{\mu_n\}$ are two sequences of positive numbers, each tending to $\infty$, and $\sigma_a$ and $\sigma_a^*$ are the (finite) abscissae of absolute convergence for $\varphi(s)$ and $\psi(s)$, respectively.  We assume that $\varphi(s)$ and $\psi(s)$ have analytic continuations into the entire complex plane $\mathbb{C}$ and are analytic on $\mathbb{C}$ except for a finite set $\bf{S}$ of poles.
Suppose that for some $\delta>0$, $\varphi(s)$ and $\psi(s)$  satisfy a functional equation of the form
\begin{equation}\label{19}
\chi(s):=(2\pi)^{-s}\Gamma(s)\varphi(s)=(2\pi)^{s-\delta}\Gamma(\delta-s)\psi(\delta-s).
\end{equation}
Chandrasekharan and Narasimhan show that the functional equation \eqref{19} is equivalent to the following two identities \cite[p.~6, Lemmas 4, 5]{cn1} the first of which is due to Bochner \cite{bochner}.

\begin{theorem}\label{thmmodular} The functional equation \eqref{18} is equivalent to the `modular' relation
\begin{equation*}
\sum_{n=1}^{\infty}a(n)e^{-\lambda_n x}=\left(\df{2\pi}{x}\right)^{\delta}\sum_{n=1}^{\infty}b(n)e^{-4\pi^2\mu_n/x}+P(x), \qquad \Re(x)>0,
\end{equation*}
where
\begin{equation*}
P(x):=\frac{1}{2\pi i}\int_{\mathcal{C}}(2\pi)^z\chi(z)x^{-z}dz,
\end{equation*}
where $\mathcal{C}$ is a curve or curves encircling all of $\bf{S}$.
\end{theorem}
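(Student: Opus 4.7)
My plan is to apply the Mellin transform approach of Bochner. Starting from the elementary identity $\int_0^\infty e^{-\lambda_n x}x^{s-1}\,dx = \Gamma(s)\lambda_n^{-s}$ valid for $\sigma>0$, I multiply by $a(n)$ and sum over $n\ge 1$. For $\sigma>\sigma_a$, absolute convergence of $\varphi(s)$ justifies the interchange of sum and integral, yielding
\begin{equation*}
\int_0^\infty f(x)\,x^{s-1}\,dx = \Gamma(s)\varphi(s) = (2\pi)^s\chi(s),\qquad\text{where } f(x):=\sum_{n=1}^{\infty} a(n)e^{-\lambda_n x}.
\end{equation*}
Mellin inversion then gives, for any $c>\sigma_a$ situated to the right of the pole set $\mathbf{S}$,
\begin{equation*}
f(x)=\frac{1}{2\pi i}\int_{(c)}\Gamma(z)\varphi(z)x^{-z}\,dz = \frac{1}{2\pi i}\int_{(c)}(2\pi)^z\chi(z)x^{-z}\,dz.
\end{equation*}

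To obtain the modular relation I shift the line of integration to $\Re(z)=c'$ with $c'<\delta-\sigma_a^*$ and $c'$ to the left of every pole in $\mathbf{S}$. By the residue theorem, the residues swept out aggregate precisely to $P(x)$. In the shifted integral I invoke \eqref{19} to replace $(2\pi)^z\chi(z)$ by $(2\pi)^{2z-\delta}\Gamma(\delta-z)\psi(\delta-z)$, then substitute $w=\delta-z$. Collecting the factor $(2\pi/x)^{\delta}$ leaves
\begin{equation*}
f(x)=P(x)+\left(\frac{2\pi}{x}\right)^{\delta}\frac{1}{2\pi i}\int_{(\delta-c')}\Gamma(w)\psi(w)\left(\frac{4\pi^2}{x}\right)^{-w}dw.
\end{equation*}
Since $\delta-c'>\sigma_a^*$, the series for $\psi(w)$ converges absolutely on that line, and termwise Mellin inversion of $\Gamma(w)\psi(w)$ reconstructs $\sum_{n=1}^{\infty}b(n)e^{-4\pi^2\mu_n/x}$, giving the forward direction.

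For the converse, assuming the modular relation holds, I apply the Mellin transform in $x$. The left-hand side produces $(2\pi)^s\chi(s)$ as above. The dual sum on the right, via $x\mapsto 4\pi^2/x$, produces $(2\pi)^{2s-\delta}\Gamma(\delta-s)\psi(\delta-s)$. The contribution of $P(x)$ is a finite combination of residues of the very meromorphic function whose singularities it was defined to capture, so it enters consistently on both sides after analytic continuation. Equating the resulting expressions in their common strip of regularity yields \eqref{19}.

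The principal technical obstacle is the contour shift: one must show that the horizontal segments of the shifted rectangle contribute vanishing integrals, which requires uniform bounds on $|\Gamma(\sigma+it)\varphi(\sigma+it)|$ as $|t|\to\infty$, uniformly for $\sigma\in[c',c]$. Such bounds follow from Stirling's formula for $\Gamma$, together with the absolute-convergence bounds for $\varphi$ in $\sigma\ge\sigma_a$ and (via \eqref{19}) for $\psi(\delta-s)$ in $\sigma\le\delta-\sigma_a^*$, interpolated across the critical strip by Phragm\'en--Lindel\"of. This verification, carried out in Chandrasekharan--Narasimhan \cite{cn1}, is standard Hecke-theoretic bookkeeping, but it is where the real care is needed.
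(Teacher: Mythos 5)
The paper does not actually prove Theorem \ref{thmmodular}: it is quoted from Chandrasekharan and Narasimhan \cite[p.~6, Lemma 4]{cn1}, with the forward identity attributed to Bochner \cite{bochner}, and the paper never uses it in the sequel. Your forward direction is precisely the standard Mellin-inversion argument of those references: represent $\sum_n a(n)e^{-\lambda_n x}$ as $\frac{1}{2\pi i}\int_{(c)}(2\pi)^z\chi(z)x^{-z}\,dz$, shift the contour past $\mathbf{S}$ to collect $P(x)$, apply \eqref{19}, and re-expand the dual Dirichlet series term by term. The one point you flag as the real work --- decay on the horizontal segments, from Stirling together with Phragm\'en--Lindel\"of across the critical strip --- is indeed where the care lies, and deferring that estimate to \cite{cn1} is reasonable for a result the paper itself only cites.

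The converse as you sketch it, however, has a genuine gap. You propose to take the Mellin transform in $x$ of both sides of the modular relation and equate the results ``in their common strip of regularity,'' but no such strip exists: the transform of the left-hand side converges only for $\sigma>\max(\sigma_a,0)$ (the series grows like a power of $1/x$ as $x\to0^{+}$), while the transform of the dual sum converges only for $\sigma<\delta-\sigma_a^{*}$, and these half-planes are disjoint in every example treated in the paper (e.g.\ for $r_k(n)$ one has $\sigma_a=\sigma_a^{*}=\delta=k/2$, so the half-planes are $\sigma>k/2$ and $\sigma<0$). Worse, $P(x)$ is a finite linear combination of terms $x^{-z_0}(\log x)^{k}$, whose Mellin transform over $(0,\infty)$ diverges for every $s$, so the claim that its contribution ``enters consistently on both sides'' cannot be made literal. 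The standard repair --- and what \cite{cn1} actually does, following Hecke and Riemann --- is to split $\int_0^{\infty}=\int_0^{1}+\int_1^{\infty}$, use the hypothesized modular relation to rewrite the integral over $(0,1)$ as an integral over $(1,\infty)$ plus the explicit polar part coming from $P$, and read off from the resulting representation both the meromorphic continuation of $(2\pi)^{s}\chi(s)$ and its invariance under $s\mapsto\delta-s$ in the sense of \eqref{19}. You should either carry out that splitting or restate the converse at that level of precision; as written, the second half of the equivalence is not established.
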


\begin{theorem}\label{thmriesz} Let $J_{\nu}(z)$ denote the ordinary Bessel function of order $\nu$.  Let $x>0$ and $\rho>2\sigma_a^*-\delta-\frac12$.  Then the functional equation \eqref{18} is equivalent to the identity 
\begin{gather}
\df{1}{\Gamma(\rho+1)}{\sum_{\lambda_n\leq x}}^{\prime}a(n)(x-\lambda_n)^{\rho}=
\left(\df{1}{2\pi}\right)^{\rho}
\sum_{n=1}^{\infty}b(n)\left(\df{x}{\mu_n}\right)^{(\delta+\rho)/2}J_{\delta+\rho}(4\pi\sqrt{\mu_n x})+Q_{\rho}(x),\label{21}
\end{gather}
where the prime $\prime$ on the summation sign on the left side indicates that if $\rho=0$ and $x\in\{\lambda_n\}$, then only $\tf12 a(x)$ is counted.  Furthermore,  
 $Q_{\rho}(x)$ is  defined by
\begin{equation}\label{22}
Q_{\rho}(x):=\df{1}{2\pi i}\int_{\mathcal{C}}\df{\chi(z)(2\pi)^zx^{z+\rho}}{\Gamma(\rho+1+z)}dz,
\end{equation}
where $\mathcal{C}$ is a curve or curves encircling  $\bf{S}$.
\end{theorem}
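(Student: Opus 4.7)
The plan is to deduce \eqref{21} from \eqref{19} by a Mellin--Barnes contour shift; the converse follows by Mellin inversion of \eqref{21}. I would begin with the elementary Mellin-pair identity
\begin{equation*}
\frac{(x-\lambda_n)_+^{\rho}}{\Gamma(\rho+1)}=\frac{1}{2\pi i}\int_{(c)}\frac{\Gamma(s)\,\lambda_n^{-s}x^{s+\rho}}{\Gamma(s+\rho+1)}\,ds\qquad(c>0),
\end{equation*}
which is an immediate consequence of $B(s,\rho+1)=\Gamma(s)\Gamma(\rho+1)/\Gamma(s+\rho+1)$ and Mellin inversion. Choosing $c>\sigma_a$, multiplying by $a(n)$, and interchanging sum with integral (legitimate by absolute convergence and Stirling's estimate) recasts the left side of \eqref{21} as
\begin{equation*}
\frac{1}{2\pi i}\int_{(c)}\frac{(2\pi)^s\chi(s)\,x^{s+\rho}}{\Gamma(s+\rho+1)}\,ds,
\end{equation*}
since $\Gamma(s)\varphi(s)=(2\pi)^s\chi(s)$.

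Next I would shift the contour leftwards to $\Re(s)=c'$ with $c'<\delta-\sigma_a^{*}$, crossing only the finite pole set $\mathbf{S}$. By \eqref{22} the sum of residues picked up is exactly $Q_\rho(x)$. On the new line, substituting the second form of $\chi(s)$ from \eqref{19} gives integrand $(2\pi)^{2s-\delta}\Gamma(\delta-s)\psi(\delta-s)\,x^{s+\rho}/\Gamma(s+\rho+1)$; expanding $\psi(\delta-s)$ as its (now absolutely convergent) Dirichlet series and interchanging sum with integral reduces matters to evaluating
\begin{equation*}
\frac{1}{2\pi i}\int_{(c')}\frac{\Gamma(\delta-s)\,(4\pi^2\mu_n x)^s}{\Gamma(s+\rho+1)}\,ds.
\end{equation*}
The substitution $s\mapsto s+\delta$ converts this into the classical Barnes representation
\begin{equation*}
\frac{J_\nu(2\sqrt{u})}{u^{\nu/2}}=\frac{1}{2\pi i}\int_{(\gamma)}\frac{\Gamma(-w)\,u^w}{\Gamma(\nu+w+1)}\,dw
\end{equation*}
with $\nu=\delta+\rho$ and $u=4\pi^2\mu_n x$, so the integral equals $(4\pi^2\mu_n x)^{(\delta-\rho)/2}J_{\delta+\rho}(4\pi\sqrt{\mu_n x})$. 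Assembling the prefactors $(2\pi)^{-\delta}(4\pi^2)^{(\delta-\rho)/2}\mu_n^{-(\delta+\rho)/2}x^{(\delta+\rho)/2}=(2\pi)^{-\rho}(x/\mu_n)^{(\delta+\rho)/2}$ reproduces exactly the right side of \eqref{21}.

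The main obstacle is the analytic bookkeeping: (i) controlling the integrand on the horizontal segments $\Im(s)=\pm T$ so that the contour shift is justified, (ii) the two applications of Fubini, and (iii) absolute convergence of the Bessel series on the right. Each of these rests on Stirling's formula for $\Gamma$ and on the estimate $J_\nu(y)=O(y^{-1/2})$ as $y\to\infty$, and all three are accommodated simultaneously by the single hypothesis $\rho>2\sigma_a^{*}-\delta-\tfrac12$, which supplies just enough decay in $|t|$ along vertical lines and just enough decay in $\mu_n$ term-by-term. For the converse implication I would start from \eqref{21}, multiply by $x^{-s-\rho-1}$, integrate over $x\in(0,\infty)$, and reconstruct \eqref{19} using the known Mellin transform of $y^{(\delta+\rho)/2}J_{\delta+\rho}(a\sqrt{y})$, which reverses the computation above.
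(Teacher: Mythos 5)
This theorem is quoted in the paper from Chandrasekharan and Narasimhan \cite[Lemmas 4, 5]{cn1} without proof, so there is no in-paper argument to compare against; your Mellin--Barnes sketch --- writing the Riesz mean as $\frac{1}{2\pi i}\int_{(c)}(2\pi)^{z}\chi(z)x^{z+\rho}\Gamma(z+\rho+1)^{-1}\,dz$, shifting the line past $\mathbf{S}$ to collect $Q_{\rho}(x)$, and recognizing the Barnes integral for $J_{\delta+\rho}$ --- is exactly the classical route, and the computation, including the prefactor bookkeeping that yields $(2\pi)^{-\rho}(x/\mu_n)^{(\delta+\rho)/2}$, is correct. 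The only points left open are the ones you already flag (Phragm\'en--Lindel\"of control on the horizontal segments, the two Fubini interchanges, and the converse direction via Mellin inversion, where term-by-term transformation of the Bessel series needs care), together with the small remark that the $\tfrac12 a(x)$ convention at $\rho=0$ arises as the boundary value of the discontinuous-factor integral when $x=\lambda_n$.
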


The restriction $\rho>2\sigma_a^*-\delta-\frac12$ can be replaced by $\rho>2\sigma_a^*-\delta-\frac32$ under certain conditions given in \cite[p.~14, Theorem III]{cn1}. Because we later use analytic continuation, this extension is not important for us here.

Theorem \ref{thmmodular} is not explicitly used in the sequel.  However, Theorem \ref{thmriesz} is the key to our main theorem, Theorem \ref{thm2}.

We conclude our paper with seven examples, including the following arithmetical functions: $r_k(n)$, the number of representations of $n$ as a sum of $k$ squares;  $\sigma_k(n)$, the sum of the $k$ powers of the divisors of $n$; Ramanujan's arithmetical function $\tau(n)$;  $\chi(n)$, a primitive character;  and $F(n)$, the number of integral ideals of norm $n$ in an imaginary quadratic number field.   The identity involving $r_k(n)$ is known (but not well known).  

\section{Preliminaries}

We refer readers to G.~N.~Watson's classical treatise for the definitions of the Bessel functions $J_{\nu}(z)$ and $K_{\nu}(z)$ \cite[pp.~15, 78]{watson}.  The following lemmas will be used in the sequel.

\begin{lemma}\label{asymptotic}\cite[pp.~199, 202]{watson} Let $J_{\nu}(x)$ denote the ordinary Bessel function of order $\nu$, and let $K_{\nu}(x)$ denote the modified Bessel function of order $\nu$. As $z\to\infty$,
\begin{align}
J_{\nu}(z)=&\left(\df{2}{\pi z}\right)^{1/2}\left\{\cos\left(z-\frac12\nu\pi-\frac14 \pi\right)+O\left(\df{1}{z}\right)\right\},\label{asymptotic1}\\
K_{\nu}(z)=&\left(\df{2}{\pi z}\right)^{1/2}e^{-z}\left\{1+O\left(\df{1}{z}\right)\right\}.\label{asymptotic2}
\end{align}
\end{lemma}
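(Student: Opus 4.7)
The plan is to derive both asymptotics from classical integral representations by the method of Laplace/steepest descent, since each Bessel function has a one-dimensional integral representation in which the integrand exhibits a unique dominant saddle (or endpoint) as $z \to \infty$. I will treat the two functions separately, as the saddle structure differs.

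First I would handle $K_\nu(z)$ via the representation
\begin{equation*}
K_\nu(z)=\int_0^{\infty}e^{-z\cosh t}\cosh(\nu t)\,dt,\qquad \Re(z)>0,
\end{equation*}
which is standard. Extracting the factor $e^{-z}$, I rewrite the integrand as $e^{-z(\cosh t-1)}\cosh(\nu t)$, whose phase $\cosh t-1$ has a unique minimum at $t=0$ with $\cosh t-1=\tfrac12 t^2+O(t^4)$. The change of variables $u=\sqrt{z}\,t$ (Laplace's method) reduces the leading behavior to the Gaussian integral $\int_0^{\infty}e^{-u^2/2}\,du=\sqrt{\pi/2}$, producing the prefactor $(2/(\pi z))^{1/2}e^{-z}$. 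The $O(1/z)$ error follows by carrying one more term in the Taylor expansions of $\cosh t-1$ and $\cosh(\nu t)$, and bounding the tail $t\geq \delta$ by $e^{-cz}$ for some $c>0$, which is absorbed into the $O(1/z)$ remainder.

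Next, for $J_\nu(z)$ I would use the Hankel decomposition $J_\nu = \tfrac12(H_\nu^{(1)}+H_\nu^{(2)})$ together with the integral representations of the Hankel functions in terms of loop integrals of $e^{iz\sinh w-\nu w}$. The saddle points lie at $w=\pm i\pi/2$; deforming the contours through these saddles by steepest descent produces
\begin{equation*}
H_\nu^{(1,2)}(z)=\left(\df{2}{\pi z}\right)^{1/2}\exp\!\left(\pm i\!\left(z-\tf12\nu\pi-\tf14\pi\right)\right)\left\{1+O\!\left(\df{1}{z}\right)\right\},
\end{equation*}
and averaging the two gives the cosine form in \eqref{asymptotic1}. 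Again the $O(1/z)$ remainder is quantified by retaining the quadratic-plus-cubic Taylor data at the saddle and estimating the contribution away from a shrinking neighborhood of $w=\pm i\pi/2$.

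The main technical obstacle is a uniform justification of the error term: one must show that the contributions from the tails of the integrals (respectively, from the portions of the steepest descent contours outside a neighborhood of the saddle of size $\asymp z^{-1/2}$) are $O(z^{-3/2}e^{-z})$ for $K_\nu$ and $O(z^{-3/2})$ for $J_\nu$, so that after dividing by the leading prefactor $(2/(\pi z))^{1/2}$ one genuinely obtains a relative error of size $O(1/z)$. This is the only delicate point; everything else is a direct evaluation of a Gaussian integral. Alternatively, one can appeal to the differential equation satisfied by $w=z^{1/2}K_\nu(z)$ (respectively $z^{1/2}J_\nu(z)$) and construct the asymptotic solutions by a Liouville--Green / WKB argument, which yields the same leading terms and error estimates; either route suffices.
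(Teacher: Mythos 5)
The paper does not prove this lemma at all: it is quoted verbatim from Watson's treatise \cite[pp.~199, 202]{watson}, so there is no in-paper argument to compare against. Your sketch is the classical derivation --- Laplace's method at $t=0$ applied to $K_\nu(z)=\int_0^\infty e^{-z\cosh t}\cosh(\nu t)\,dt$, and steepest descent through the saddles $w=\pm i\pi/2$ for the Hankel functions followed by averaging to get $J_\nu$ --- which is essentially the route Watson himself takes in Chapter VII, and the error analysis you outline (quadratic-plus-cubic Taylor data near the saddle, exponentially small tails) is the standard and correct way to secure the relative $O(1/z)$ term. One concrete point to fix: your own computation gives
\begin{equation*}
\int_0^\infty e^{-z t^2/2}\,dt=\frac{1}{\sqrt{z}}\int_0^\infty e^{-u^2/2}\,du=\left(\frac{\pi}{2z}\right)^{1/2},
\end{equation*}
so the prefactor you actually derive for $K_\nu$ is $(\pi/(2z))^{1/2}e^{-z}$, not $(2/(\pi z))^{1/2}e^{-z}$ as you wrote and as \eqref{asymptotic2} states. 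The constant $(\pi/(2z))^{1/2}$ is the correct one (it is forced by \eqref{K} at $\nu=1/2$), so your method in fact exposes a typo in the lemma as printed; you should not silently relabel $\sqrt{\pi/(2z)}$ as $\sqrt{2/(\pi z)}$ to match the statement. The slip is harmless for the paper, which only uses \eqref{asymptotic2} for the exponential decay of $K_\nu$ in convergence arguments, but in a proof it should be stated correctly. The $J_\nu$ constant in \eqref{asymptotic1}, and in your Hankel-function computation, is correct as written.
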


\begin{lemma}\label{watson2} \cite[p.~79]{watson} For each non-negative integer $m$ and arbitrary $\nu$,
\begin{equation}\label{29}
\left(\df{d}{zdz}\right)^m\{z^{\nu}K_{\nu}(z)\}=(-1)^mz^{\nu-m}K_{\nu-m}(z).
\end{equation}
\end{lemma}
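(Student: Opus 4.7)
The plan is to prove \eqref{29} by induction on $m$, with the one-step case doing all the work and the inductive step being a cosmetic relabeling. The base case $m=0$ is trivial. For $m=1$, I would invoke the standard three-term recurrences for the modified Bessel function,
$$K_{\nu-1}(z)-K_{\nu+1}(z)=-\frac{2\nu}{z}K_{\nu}(z),\qquad K_{\nu-1}(z)+K_{\nu+1}(z)=-2K_{\nu}'(z),$$
whose half-sum yields the first-order relation
$$K_{\nu}'(z)=-K_{\nu-1}(z)-\frac{\nu}{z}K_{\nu}(z).$$
Applying the product rule and substituting this expression for $K_{\nu}'(z)$ gives
$$\frac{d}{dz}\bigl\{z^{\nu}K_{\nu}(z)\bigr\}=\nu z^{\nu-1}K_{\nu}(z)+z^{\nu}K_{\nu}'(z)=-z^{\nu}K_{\nu-1}(z),$$
and dividing through by $z$ establishes the $m=1$ case of \eqref{29}.

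For the inductive step, suppose \eqref{29} holds at level $m$. Then
$$\left(\frac{d}{z\,dz}\right)^{m+1}\!\bigl\{z^{\nu}K_{\nu}(z)\bigr\}=\frac{d}{z\,dz}\Bigl[(-1)^{m}z^{\nu-m}K_{\nu-m}(z)\Bigr],$$
and applying the already-proven $m=1$ identity with $\nu$ replaced by $\nu-m$ to the bracketed function gives
$$(-1)^{m}\cdot\bigl(-z^{\nu-m-1}K_{\nu-m-1}(z)\bigr)=(-1)^{m+1}z^{\nu-(m+1)}K_{\nu-(m+1)}(z),$$
which closes the induction.

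An alternative approach worth mentioning is to change variables to $u=z^{2}/2$, under which $d/(z\,dz)=d/du$, and to use the integral representation $K_{\nu}(z)=\tfrac{1}{2}(z/2)^{\nu}\int_{0}^{\infty}t^{-\nu-1}e^{-t-z^{2}/(4t)}\,dt$; then $z^{\nu}K_{\nu}(z)$ is (up to constants) a single $u$-parametrized Laplace-type integral, repeated $u$-differentiation pulls out $m$ factors of $-1/(2t)$, and the resulting integral is recognized as $(-1)^{m}z^{\nu-m}K_{\nu-m}(z)$ directly. I would opt for the inductive proof above, since it rests only on the elementary recurrence and avoids any justification of differentiation under the integral sign. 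The only real pitfall is sign bookkeeping: the $K_{\nu}$ recurrences carry different signs than their $J_{\nu}$ counterparts, so I would verify the half-sum formula at the outset rather than quote it from memory.
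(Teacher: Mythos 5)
Your proof is correct. The paper offers no proof of this lemma---it is quoted directly from Watson's treatise---and your induction from the standard $K_{\nu}$ recurrences (whose signs you have handled correctly, yielding $\tfrac{d}{dz}\{z^{\nu}K_{\nu}(z)\}=-z^{\nu}K_{\nu-1}(z)$ for the base step) is precisely the standard derivation found in the cited source.
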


\begin{lemma}\label{limit} \cite[p.~329]{bdkz} For $\Re(\nu)>0$,
\begin{equation*}
\lim_{z\to 0}z^{\nu}K_{\nu}(z)=2^{\nu-1}\Gamma(\nu).
\end{equation*}
\end{lemma}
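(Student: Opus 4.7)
The plan is to reduce the claim to the standard power series expansions, using the representation
$$K_{\nu}(z) = \frac{\pi}{2\sin(\pi\nu)}\bigl(I_{-\nu}(z) - I_{\nu}(z)\bigr),$$
valid for $\nu\notin\Z$, together with
$$I_{\nu}(z) = \sum_{k=0}^{\infty} \frac{1}{k!\,\Gamma(\nu+k+1)}\left(\frac{z}{2}\right)^{\nu+2k}.$$
I would multiply by $z^{\nu}$ and examine the two sums $z^{\nu}I_{\nu}(z)$ and $z^{\nu}I_{-\nu}(z)$ separately as $z\to 0$.

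Because $\Re(\nu)>0$, every term of $z^{\nu}I_{\nu}(z)$ carries a factor $z^{2\nu+2k}$ with $\Re(2\nu+2k)>0$, so this contribution vanishes in the limit. On the other hand, $z^{\nu}I_{-\nu}(z)$ has $k=0$ term equal to $z^{\nu}(z/2)^{-\nu}/\Gamma(1-\nu) = 2^{\nu}/\Gamma(1-\nu)$, while all higher terms involve strictly positive powers of $z$ and drop out. Combining these pieces gives
$$\lim_{z\to 0} z^{\nu}K_{\nu}(z) = \frac{\pi\cdot 2^{\nu-1}}{\sin(\pi\nu)\,\Gamma(1-\nu)},$$
and the reflection formula $\Gamma(\nu)\Gamma(1-\nu)=\pi/\sin(\pi\nu)$ converts the right-hand side to $2^{\nu-1}\Gamma(\nu)$, as required.

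The main obstacle is that the representation via $I_{-\nu}-I_{\nu}$ breaks down when $\nu\in\Z_{>0}$, since both the numerator and $\sin(\pi\nu)$ vanish. I would handle this by analyticity: for each fixed $z>0$ the function $\nu\mapsto z^{\nu}K_{\nu}(z)$ is analytic in $\nu$ on $\Re(\nu)>0$, and so is $2^{\nu-1}\Gamma(\nu)$. Having established the identity off the integers, an appeal to analytic continuation (or simply continuity in $\nu$) extends it to every $\nu$ with $\Re(\nu)>0$. Alternatively, for a positive integer $\nu=n$ one can quote the classical expansion of $K_n(z)$ near $z=0$, whose leading singular term $\tfrac12(n-1)!\,(z/2)^{-n}$ reproduces the same value $2^{n-1}(n-1)!=2^{n-1}\Gamma(n)$.
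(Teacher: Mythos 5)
Your argument is correct. Note that the paper does not prove this lemma at all --- it simply quotes it from \cite[p.~329]{bdkz} --- so there is no internal proof to compare against; your derivation from $K_{\nu}(z)=\tfrac{\pi}{2\sin(\pi\nu)}\bigl(I_{-\nu}(z)-I_{\nu}(z)\bigr)$, the power series for $I_{\pm\nu}$, and the reflection formula is the standard route and is complete for non-integer $\nu$ with $\Re(\nu)>0$: the term-by-term passage to the limit is justified by locally uniform convergence of the series, $z^{\nu}I_{\nu}(z)\to 0$ because every exponent $2\nu+2k$ has positive real part, and $z^{\nu}I_{-\nu}(z)\to 2^{\nu}/\Gamma(1-\nu)$, after which $\Gamma(\nu)\Gamma(1-\nu)=\pi/\sin(\pi\nu)$ gives $2^{\nu-1}\Gamma(\nu)$. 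The only step that deserves more care is your first patch for integer $\nu=n$: continuity of $\nu\mapsto z^{\nu}K_{\nu}(z)$ for each fixed $z$ does not by itself allow you to interchange the limits $z\to 0$ and $\nu\to n$; you would need the $z\to 0$ convergence to be locally uniform in $\nu$ (which is true, but requires an extra argument). Your second patch is the cleaner one and closes the gap: the classical expansion of $K_{n}(z)$ near the origin has leading term $\tfrac12(n-1)!\,(z/2)^{-n}$, with corrections of order $z^{-n+2}$ and $z^{n}\log z$, so $z^{n}K_{n}(z)\to 2^{n-1}(n-1)!=2^{n-1}\Gamma(n)$ directly. With that choice made explicit, the proof stands.
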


\begin{lemma}\label{watson} \cite[Equation (2), p.~410]{watson}
  Assume that $\Re(\nu)+1>|\Re(\mu)|$ and that $a,b>0$.  Then
\begin{equation}\label{23}
\int_0^{\infty}t^{\mu+\nu+1}K_{\mu}(at)J_{\nu}(bt)dt=\df{(2a)^{\mu}(2b)^{\nu}\Gamma(\mu+\nu+1)}{(a^2+b^2)^{\mu+\nu+1}}.
\end{equation}
\end{lemma}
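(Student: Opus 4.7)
The plan is to expand $J_\nu(bt)$ in its Maclaurin series in $t$, integrate term-by-term against $t^{\mu+\nu+1}K_\mu(at)$, and then recognize the resulting series via the generalized binomial theorem.

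First I would substitute
$$J_{\nu}(bt)=\sum_{k=0}^{\infty}\df{(-1)^{k}(b/2)^{2k+\nu}}{k!\,\Gamma(k+\nu+1)}\,t^{2k+\nu}$$
into the left side of \eqref{23}. This reduces the problem to evaluating, for each $k\geq 0$, the Mellin-type integral
$$\int_{0}^{\infty}t^{\mu+2\nu+2k+1}K_{\mu}(at)\,dt=2^{\mu+2\nu+2k}\,a^{-\mu-2\nu-2k-2}\,\Gamma(\mu+\nu+k+1)\,\Gamma(\nu+k+1),$$
a classical evaluation that follows, for $\Re(s)>|\Re(\mu)|$, from the integral representation $K_{\mu}(at)=\int_{0}^{\infty}e^{-at\cosh u}\cosh(\mu u)\,du$ combined with the definition of $\Gamma$. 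The hypothesis $\Re(\nu)+1>|\Re(\mu)|$ secures the convergence condition $\Re(\mu+2\nu+2k+2)>|\Re(\mu)|$ for every $k\geq 0$.

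Cancelling the factors $\Gamma(\nu+k+1)$ that appear in numerator and denominator and collecting the powers of $a$ and $b$, I would arrive at
$$\int_{0}^{\infty}t^{\mu+\nu+1}K_{\mu}(at)J_{\nu}(bt)\,dt=\df{2^{\mu+\nu}a^{\mu}b^{\nu}}{a^{2\mu+2\nu+2}}\sum_{k=0}^{\infty}\df{(-1)^{k}\,\Gamma(\mu+\nu+1+k)}{k!}\left(\df{b^{2}}{a^{2}}\right)^{k}.$$
By the generalized binomial theorem the inner sum is $\Gamma(\mu+\nu+1)\bigl(1+b^{2}/a^{2}\bigr)^{-\mu-\nu-1}$, and a brief algebraic tidy-up produces the advertised answer $(2a)^{\mu}(2b)^{\nu}\Gamma(\mu+\nu+1)/(a^{2}+b^{2})^{\mu+\nu+1}$.

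The main obstacle is the justification of the interchange of summation and integration. Using the small-$t$ behavior $K_{\mu}(at)=O(t^{-|\Re(\mu)|})$ supplied (up to constants) by Lemma \ref{limit}, together with the large-$t$ asymptotic \eqref{asymptotic2}, the integrand is absolutely integrable on $(0,\infty)$ precisely when $\Re(\nu)+1>|\Re(\mu)|$, which is the stated hypothesis; bounding the partial sums of the Maclaurin series by the power-series expansion of $(bt/2)^{\nu}e^{bt/2}/\Gamma(\nu+1)$ then furnishes a dominating function that legitimizes Fubini/Tonelli and finishes the proof.
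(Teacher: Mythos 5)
The paper does not prove this lemma at all: it is quoted directly from Watson \cite[p.~410, Eq.~(2)]{watson}, so there is no internal argument to compare yours against. On its own terms, your computation is the classical one and the algebra checks out: expanding $J_\nu(bt)$, integrating termwise via Lemma \ref{Kintegral} (which gives exactly the factor $2^{\mu+2\nu+2k}a^{-\mu-2\nu-2k-2}\Gamma(\mu+\nu+k+1)\Gamma(\nu+k+1)$ you quote), cancelling $\Gamma(\nu+k+1)$, and summing the binomial series does produce $(2a)^{\mu}(2b)^{\nu}\Gamma(\mu+\nu+1)/(a^{2}+b^{2})^{\mu+\nu+1}$.

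The gap is in your last paragraph. The series you are left with,
\begin{equation*}
\sum_{k\geq 0}\frac{(-1)^{k}\,\Gamma(\mu+\nu+1+k)}{k!}\left(\frac{b^{2}}{a^{2}}\right)^{k},
\end{equation*}
has radius of convergence $1$ in the variable $b^{2}/a^{2}$, so the termwise procedure can only be legitimate for $0<b<a$; for $b\geq a$ the series diverges and no Fubini--Tonelli argument can save the interchange. Concretely, replacing the terms of the Maclaurin series of $J_\nu(bt)$ by their absolute values yields $t^{\mu+\nu+1}K_{\mu}(at)I_{\nu}(bt)$, where $I_\nu$ is the modified Bessel function of the first kind; since $I_{\nu}(bt)$ grows like $e^{bt}t^{-1/2}$ (not $e^{bt/2}$, so your proposed dominant is not even an upper bound), the would-be dominating function behaves like $t^{\mu+\nu}e^{(b-a)t}$ at infinity and is integrable only when $b<a$. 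The hypothesis $\Re(\nu)+1>|\Re(\mu)|$ controls only the behaviour at $t=0$ and is irrelevant to this difficulty. The repair is standard and should be stated: prove the identity for $0<b<a$ as you do, then note that the left-hand side converges absolutely for every $b>0$ (indeed for $|\Im(b)|<a$, since $K_{\mu}(at)=O(e^{-at}t^{-1/2})$) and is analytic in $b$ there, as is the right-hand side, so the identity extends to all $b>0$ by analytic continuation. Without that final step the formula is established on only part of the stated range.
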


\begin{lemma}\label{Kintegral} \cite[p.~708, no.~16]{gr} For $\Re(\mu+1\pm \nu)>0$ and $\Re(a)>0$,
\begin{equation*}
\int_0^{\infty}x^{\mu}K_{\nu}(ax)dx=2^{\mu-1}a^{-\mu-1}
\Gamma\left(\df{1+\mu+\nu}{2}\right)\Gamma\left(\df{1+\mu-\nu}{2}\right).
\end{equation*}
\end{lemma}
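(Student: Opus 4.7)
The plan is to recognize the identity as the Mellin transform of $K_\nu(ax)$ and to derive it from the classical Schl\"afli-type integral representation
$$K_\nu(z) = \frac{1}{2}\left(\frac{z}{2}\right)^\nu \int_0^\infty t^{-\nu-1} \exp\left(-t - \frac{z^2}{4t}\right) dt, \qquad \Re(z^2) > 0.$$
Setting $z = ax$ and inserting into $\int_0^\infty x^\mu K_\nu(ax)\,dx$ produces a double integral over $(x,t)\in(0,\infty)^2$, and the strategy is simply to swap the order of integration and evaluate the two resulting one-variable integrals explicitly as Gamma functions.

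The key checks for Fubini are as follows: near $x=0$ the hypothesis $\Re(\mu+\nu+1)>0$ gives integrability of the factor $x^{\mu+\nu}$; at $x=\infty$ the Gaussian $\exp(-a^2 x^2/(4t))$ forces rapid decay for each fixed $t$; and on the $t$-side, the combined weight $t^{-\nu-1}$, $e^{-t}$, and the mixed exponential keeps the full double integral absolutely convergent under both hypotheses. Once the order is swapped, the inner $x$-integral collapses via the substitution $u = a^2 x^2/(4t)$ to a constant times $t^{(\mu+\nu+1)/2}\Gamma((\mu+\nu+1)/2)$, and the remaining $t$-integral becomes
$$\int_0^\infty t^{(\mu-\nu-1)/2} e^{-t}\,dt = \Gamma\left(\frac{\mu-\nu+1}{2}\right),$$
which converges at $t=0$ precisely because of the second hypothesis $\Re(\mu-\nu+1)>0$. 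Collecting the powers of $2$ and $a$ produces the stated constant $2^{\mu-1}a^{-\mu-1}$, completing the derivation.

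The main obstacle is really just the absolute-convergence bookkeeping justifying Fubini; the Gamma-function evaluations themselves are entirely routine. If that interchange turned out to be awkward under the exact symmetric hypothesis $\Re(\mu+1\pm\nu)>0$, I would fall back on the Mellin--Barnes representation of $K_\nu$ and shift the contour, which reads the formula off essentially as a Mellin inversion and encodes the two hypotheses naturally as the conditions that the relevant Gamma-function poles lie to the left of the contour. Either route produces the same evaluation; I would attempt the direct double-integral method first.
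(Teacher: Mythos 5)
The paper does not prove this lemma at all: it is quoted directly from Gradshteyn--Ryzhik \cite[p.~708, no.~16]{gr} (formula 6.561.16 there), so any actual derivation is necessarily ``different from the paper.'' Your derivation is correct and standard: it is precisely the computation of the Mellin transform of $K_\nu$ via the heat-kernel (Schl\"afli) representation, and the bookkeeping checks out --- the inner $x$-integral gives $2^{\mu+\nu}a^{-\mu-\nu-1}t^{(\mu+\nu+1)/2}\Gamma\bigl(\tfrac{\mu+\nu+1}{2}\bigr)$, the outer $t$-integral gives $\Gamma\bigl(\tfrac{\mu-\nu+1}{2}\bigr)$, and the prefactor $\tfrac12(a/2)^\nu$ combines to the stated constant $2^{\mu-1}a^{-\mu-1}$. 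The two hypotheses $\Re(\mu+1\pm\nu)>0$ land exactly where you say they do, and Tonelli on the manifestly positive integrand (after replacing $\mu,\nu$ by their real parts) justifies the interchange. The one point you gloss over is the domain of validity in $a$: the representation $K_\nu(z)=\tfrac12(z/2)^\nu\int_0^\infty t^{-\nu-1}e^{-t-z^2/(4t)}\,dt$ requires $\Re(z^2)>0$, i.e.\ $|\arg z|<\pi/4$, whereas the lemma asserts the result for all $\Re(a)>0$, i.e.\ $|\arg a|<\pi/2$. Your argument as written therefore establishes the identity only for $|\arg a|<\pi/4$; to reach the full stated range you must append an analytic continuation in $a$, noting that both sides are analytic in the half-plane $\Re(a)>0$ (the left side because $K_\nu(ax)\sim\sqrt{\pi/(2ax)}\,e^{-ax}$ forces convergence there, the right side trivially). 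This is a one-line fix, not a structural flaw; with it added, your proof is complete. Your proposed fallback via the Mellin--Barnes representation would avoid the issue and is the other standard route.
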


\section{The Primary Theorem}

\begin{theorem}\label{thm2}  Assume that $\Re(\nu)> 0$ and $\Re(s)>0$.  Also assume that  $\delta+\rho+\nu+1>\sigma_a^*>0$.  Assume that the integral on the right side below converges absolutely.
Then,
\begin{gather}
\df{1}{\Gamma(\rho+1)}\sum_{n=1}^{\infty}a(n)\int_{\lambda_n}^{\infty}
(x-\lambda_n)^{\rho}x^{\nu/2}K_{\nu}(s\sqrt{x})dx\notag\\
=2^{3\delta+2\rho+\nu+1}s^{\nu}\pi^{\delta}\Gamma(\delta+\rho+\nu+1)
\sum_{n=1}^{\infty}\df{b(n)}{(s^2+16\pi^2\mu_n)^{\delta+\rho+\nu+1}}\notag\\
+\int_0^{\infty}Q_{\rho}(x)x^{\nu/2}K_{\nu}(s\sqrt{x})dx.\label{28}
\end{gather}\end{theorem}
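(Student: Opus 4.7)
The plan is to multiply both sides of the Riesz-type identity \eqref{21} from Theorem \ref{thmriesz} by $x^{\nu/2}K_{\nu}(s\sqrt{x})$ and integrate in $x$ over $(0,\infty)$. Each of the three resulting terms should line up, after routine manipulation, with the three terms on the two sides of \eqref{28}.

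On the left side of \eqref{21}, I would write $\sum_{\lambda_n\le x}^{\prime}a(n)(x-\lambda_n)^{\rho}=\sum_{n=1}^{\infty}a(n)(x-\lambda_n)_{+}^{\rho}$ and apply Fubini to swap the sum and the integral, so that the $x$-integration for the $n$-th term begins at $\lambda_n$; this produces exactly the left-hand side of \eqref{28}. The $Q_{\rho}(x)$ piece simply pulls through and becomes the residual integral at the end of \eqref{28}.

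For the Bessel-series contribution, after a similar interchange I would need to evaluate
\begin{equation*}
\int_0^{\infty}x^{(\nu+\delta+\rho)/2}K_{\nu}(s\sqrt{x})J_{\delta+\rho}\(4\pi\sqrt{\mu_n x}\)\,dx
\end{equation*}
for each $n$. The substitution $x=t^2$, $dx=2t\,dt$, converts this to
\begin{equation*}
2\int_0^{\infty}t^{\nu+\delta+\rho+1}K_{\nu}(st)J_{\delta+\rho}\(4\pi\sqrt{\mu_n}\,t\)\,dt,
\end{equation*}
which is precisely the integral evaluated by Lemma \ref{watson} with parameters $(\mu,\nu_W,a,b)=(\nu,\delta+\rho,s,4\pi\sqrt{\mu_n})$. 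Multiplying the result by the prefactor $(2\pi)^{-\rho}\mu_n^{-(\delta+\rho)/2}b(n)$ coming from \eqref{21}, using $(8\pi\sqrt{\mu_n})^{\delta+\rho}\mu_n^{-(\delta+\rho)/2}=2^{3(\delta+\rho)}\pi^{\delta+\rho}$ to clear $\mu_n$ from the numerator, and collecting the remaining powers of $2$ and $\pi$ via $2\cdot 2^{-\rho}\cdot 2^{\nu}\cdot 2^{3(\delta+\rho)}\cdot\pi^{-\rho+\delta+\rho}=2^{3\delta+2\rho+\nu+1}\pi^{\delta}$, should deliver exactly the constant and the factor $\Gamma(\delta+\rho+\nu+1)/(s^2+16\pi^2\mu_n)^{\delta+\rho+\nu+1}$ in \eqref{28}.

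The main obstacle is justifying the two interchanges of summation and integration. On the left, the exponential decay of $K_{\nu}(s\sqrt{x})$ as $x\to\infty$ from Lemma \ref{asymptotic} and the growth $K_\nu(z)\sim 2^{\nu-1}\Gamma(\nu)z^{-\nu}$ near $z=0$ from Lemma \ref{limit}, together with $\Re(\nu)>0$, $\Re(s)>0$, and the fact that $|a(n)|$ is controlled by $\sigma_a$, secure absolute convergence. On the right, the bounds $J_{\delta+\rho}(y)=O(y^{\delta+\rho})$ near $y=0$ and $O(y^{-1/2})$ near infinity, combined with the hypothesis $\delta+\rho+\nu+1>\sigma_a^*$, produce an absolutely convergent double integral—this is precisely what the hypothesis on absolute convergence of the final integral in \eqref{28} is designed to enable. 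Finally, one has to verify the range condition $\Re(\nu)+1>|\Re(\delta+\rho)|$ required by Lemma \ref{watson}, which holds in the regime of parameters considered and, if needed, can be relaxed by analytic continuation in $\nu$ after the identity is established in a smaller parameter region.
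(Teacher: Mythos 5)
Your proposal is correct and follows essentially the same route as the paper: multiply the Riesz identity \eqref{21} by $x^{\nu/2}K_{\nu}(s\sqrt{x})$, integrate term by term, evaluate the Bessel integral via the substitution $x=t^2$ and Lemma \ref{watson}, and extend the parameter range by analytic continuation; your constant bookkeeping matches \eqref{25} exactly. One small slip: with your identification of parameters, the hypothesis of Lemma \ref{watson} reads $\Re(\delta+\rho)+1>|\Re(\nu)|$, not $\Re(\nu)+1>|\Re(\delta+\rho)|$, which is also the condition the paper imposes before continuing analytically.
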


\begin{proof} Assume that $\rho>2\sigma_a^*-\delta-\frac12$.  Multiply both sides of \eqref{21} by $x^{\nu/2}K_{\nu}(s\sqrt{x})$, where $s>0$, and integrate over $0\leq x<\infty$.  Let $F_1(\delta,\rho,\nu)$ denote the left-hand side and let $F_2(\delta,\rho,\nu)$ and $F_3(\delta,\rho,\nu)$ denote, in order, the two terms on the right-hand side that we so obtain.  

First, on the left-hand side of \eqref{21}, we readily find that, for $\Re(\nu)>0$,
\begin{equation}\label{26}
F_1(\delta,\rho,\nu)= \df{1}{\Gamma(\rho+1)}\sum_{n=1}^{\infty}a(n)\int_{\lambda_n}^{\infty}
(x-\lambda_n)^{\rho}x^{\nu/2}K_{\nu}(s\sqrt{x})dx.
\end{equation}

Second, using Lemma \ref{watson}, assuming that $\Re(\delta+\rho)+1>\Re(\nu)$, using Lemma \ref{asymptotic}, and inverting the order of summation and integration by absolute convergence, we have
\begin{align}
F_2(\delta, \rho, \nu)=(2\pi)^{-\rho}\sum_{n=1}^{\infty}b(n)\mu_n^{-(\delta+\rho)/2}I(\delta, \rho, \nu),    
\end{align}
where
\begin{align}\label{24}
I(\delta,\rho,\nu):=&\int_0^{\infty}x^{(\delta+\rho+\nu)/2}J_{\delta+\rho}(4\pi\sqrt{\mu_n x})K_{\nu}(s\sqrt{x})dx\notag\\
=&2\int_0^{\infty}t^{\delta+\rho+\nu+1}J_{\delta+\rho}(4\pi\sqrt{\mu_n}\,t)K_{\nu}(st)dt\notag\\
=&2\df{(2s)^{\nu}(8\pi\sqrt{\mu_n})^{\delta+\rho}\Gamma(\delta+\rho+\nu+1)}
{(s^2+16\pi^2\mu_n)^{\delta+\rho+\nu+1}}.
\end{align}
Hence, with the use of \eqref{24} on the right side of \eqref{21}, after simplification, we obtain the sum
\begin{equation}\label{25}
F_2(\delta,\rho,\nu) =2^{3\delta+2\rho+\nu+1}s^{\nu}\pi^{\delta}\Gamma(\delta+\rho+\nu+1)
\sum_{n=1}^{\infty}\df{b(n)}{(s^2+16\pi^2\mu_n)^{\delta+\rho+\nu+1}}.
\end{equation}
Now use analytic continuation in $\rho,\nu$, and $s$ to conclude that \eqref{25} is valid  provided that  $\delta+\rho+\nu+1>\sigma_a^*$ and $\Re(s)>0$.

Lastly, for the remaining term on the right side of \eqref{21},  we find that
\begin{gather}\label{27}
F_3(\delta,\rho,\nu)= \int_0^{\infty}Q_{\rho}(x)x^{\nu/2}K_{\nu}(s\sqrt{x})dx,
\end{gather}
provided that $\Re(\nu)>0$ and $\Re(s)>0$, and that the integral above converges absolutely.

Bringing \eqref{26}, \eqref{25}, and \eqref{27} together, we complete the proof of Theorem \ref{thm2}.

\end{proof}

Theorem \ref{thm2} is a generalization of theorems proved by the first author \cite{paper1}, \cite{III}, \cite[p.~154, Equation (6.11)]{V}.  Recalling the definition of the Hurwitz zeta function, observe that the series on the right-hand side of \eqref{28} is an analogue of the Hurwitz zeta function.  Thus, \eqref{28} provides an analytic continuation for the series on the right side of \eqref{28}.  

\section{Special Cases: Non-negative integer values of $\rho$}

If we set $\rho=0$ in Theorem \ref{thm2} and employ Lemmas \ref{watson2} and \ref{asymptotic}, we find that the left-hand side of \eqref{28} is given by
\begin{align}
\sum_{n=1}^{\infty}a(n)\int_{\lambda_n}^{\infty}x^{\nu/2}K_{\nu}(s\sqrt{x})dx
=&\df{2}{s^{\nu+2}}\sum_{n=1}^{\infty}a(n)\int_{s\sqrt{\lambda_n}}^{\infty}t^{\nu+1}K_{\nu}(t)dt\notag\\
=&-\df{2}{s^{\nu+2}}\sum_{n=1}^{\infty}a(n)\int_{s\sqrt{\lambda_n}}^{\infty}
\df{d}{dt}\left\{t^{\nu+1}K_{\nu+1}(t)\right\}dt\notag\\
=&\df{2}{s^{\nu+2}}\sum_{n=1}^{\infty}a(n)(s\sqrt{\lambda_n})^{\nu+1}K_{\nu+1}(s\sqrt{\lambda_n})\notag\\
=&\df{2}{s}\sum_{n=1}^{\infty}a(n)\lambda_n^{(\nu+1)/2}K_{\nu+1}(s\sqrt{\lambda_n}).\label{30a}
\end{align}
Hence, we have established the following theorem.

\begin{theorem}\label{thm3}  Assume that $\Re(\nu)>0$ and $\Re(s)>0$.  Also assume that  $\delta+\nu+1>\sigma_a^*>0$.  Assume that the integral on the right side below converges absolutely.
Then,
\begin{align}
\df{2}{s}\sum_{n=1}^{\infty}a(n)\lambda_n^{(\nu+1)/2}K_{\nu+1}(s\sqrt{\lambda_n})
=&2^{3\delta+\nu+1}s^{\nu}\pi^{\delta}\Gamma(\delta+\nu+1)
\sum_{n=1}^{\infty}\df{b(n)}{(s^2+16\pi^2\mu_n)^{\delta+\nu+1}}\notag\\
&+\int_0^{\infty}Q_{0}(x)x^{\nu/2}K_{\nu}(s\sqrt{x})dx.\label{30}
\end{align}\end{theorem}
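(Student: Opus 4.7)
The strategy is to specialize Theorem \ref{thm2} to $\rho=0$: under this substitution, the first sum on the right-hand side becomes exactly $2^{3\delta+\nu+1}s^{\nu}\pi^{\delta}\Gamma(\delta+\nu+1)\sum_{n}b(n)/(s^2+16\pi^2\mu_n)^{\delta+\nu+1}$, and the integral remainder becomes $\int_{0}^{\infty}Q_{0}(x)x^{\nu/2}K_{\nu}(s\sqrt{x})\,dx$. The entire task, then, is to convert the left-hand side of \eqref{28} at $\rho=0$, namely $\sum_{n}a(n)\int_{\lambda_{n}}^{\infty}x^{\nu/2}K_{\nu}(s\sqrt{x})\,dx$, into the claimed Bessel series $\tfrac{2}{s}\sum_{n}a(n)\lambda_{n}^{(\nu+1)/2}K_{\nu+1}(s\sqrt{\lambda_{n}})$.

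For each fixed $n$, I would perform the substitution $t=s\sqrt{x}$ in the inner integral to get $\tfrac{2}{s^{\nu+2}}\int_{s\sqrt{\lambda_{n}}}^{\infty}t^{\nu+1}K_{\nu}(t)\,dt$. The key observation is that by Lemma \ref{watson2}, applied with $m=1$ and the parameter shifted from $\nu$ to $\nu+1$, one has $\tfrac{d}{dt}\{t^{\nu+1}K_{\nu+1}(t)\}=-t^{\nu+1}K_{\nu}(t)$, so the integrand is an exact derivative. The antiderivative vanishes at $t=\infty$ by the exponential decay in Lemma \ref{asymptotic}, leaving only the boundary value $(s\sqrt{\lambda_{n}})^{\nu+1}K_{\nu+1}(s\sqrt{\lambda_{n}})$; after multiplying by the prefactor $2/s^{\nu+2}$ this gives exactly $\tfrac{2}{s}\lambda_{n}^{(\nu+1)/2}K_{\nu+1}(s\sqrt{\lambda_{n}})$ for each $n$.

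The remaining issue, and the main technical point, is to justify the interchange of summation and integration implicit in writing $F_{1}(\delta,0,\nu)$ as the stated series. Here the exponential decay of $K_{\nu+1}(s\sqrt{\lambda_{n}})$ (again by Lemma \ref{asymptotic}, using $\Re(s)>0$) dominates the polynomial factor $\lambda_{n}^{(\nu+1)/2}$ and any subexponential growth of $a(n)$ controlled by $\sigma_{a}<\infty$, so the series on the left is absolutely convergent and the termwise manipulation is legitimate. The condition $\delta+\nu+1>\sigma_{a}^{*}$ inherited from Theorem \ref{thm2} plays no role for this side; it is needed only to guarantee convergence of the Hurwitz-type series on the right, while the assumed absolute convergence of the residual integral handles the last term. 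Combining these pieces yields \eqref{30}.
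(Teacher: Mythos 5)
Your proposal is correct and follows essentially the same route as the paper: set $\rho=0$ in Theorem \ref{thm2}, substitute $t=s\sqrt{x}$, recognize $t^{\nu+1}K_{\nu}(t)=-\frac{d}{dt}\{t^{\nu+1}K_{\nu+1}(t)\}$ via Lemma \ref{watson2}, and kill the boundary term at infinity with Lemma \ref{asymptotic}. The only (harmless) extra is your discussion of interchanging sum and integral, which is not actually needed at this stage since the left side of \eqref{28} is already a termwise sum of integrals, each evaluated in closed form.
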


Return to Theorem \ref{thm2} and, for any $m\in\mathbb{N}$, let $\rho=m$. Apply the binomial theorem on the left-hand side.
In the integrand we obtain polynomials in $x$ of degree $k$, $0\leq k\leq m$. For the integral involving $x^k$, integrate by parts $k$ times with the aid of Lemma \ref{watson2}.  With the help  of the binomial theorem once again, simplify the double sum that arises.  For the integral on the right-hand side of \eqref{28}, integrate by parts $m$ times with the help of Lemma \ref{watson2}.  In conclusion, after simplification, we obtain  Theorem \ref{thm3} with $\nu$ replaced by $\nu+m$.

Chandrasekharan and Narasimhan \cite[p.~8, Lemma 6]{cn1} proved the following theorem, which is similar in appearance to Theorem \ref{thm2} in the special case $\nu=1/2$.
\begin{theorem} Let $\rho$ denote a non-negative integer, $\Re(s)>0$, and $\rho>2\sigma_a^*-\delta-\tf12$. Then
\begin{align*}
&\left(-\df{1}{s}\df{d}{ds}\right)^{\rho}\left\{\df{1}{s}\sum_{n=1}^{\infty}a(n)e^{-s\sqrt{\lambda_n}}\right\}\\
=&2^{3\delta+\rho}\Gamma(\delta+\rho+\tf12)\pi^{\delta-1/2}
\sum_{n=1}^{\infty}\df{b(n)}{(s^2+16\pi^2\mu_n)^{\delta+\rho+1/2}}+R_{\rho}(s),
\end{align*}
where
$$ R_{\rho}(s):=\df{1}{2\pi i}
\int_{\mathbb{C}}\df{\chi(z)(2\pi)^z\Gamma(2z+2\rho+1)2^{-\rho}s^{-2z-2\rho-1}}{\Gamma(z+\rho+1)}dz.$$
\end{theorem}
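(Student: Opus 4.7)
The plan is to multiply the Riesz-type identity in Theorem \ref{thmriesz} (with parameter $\rho$) by the kernel $\frac{e^{-s\sqrt{x}}}{2\sqrt{x}}$ and integrate over $x \in (0,\infty)$, then identify each of the resulting three pieces with a term in the Chandrasekharan--Narasimhan identity. The choice of kernel is dictated by the elementary identity $\int_{\lambda_n}^\infty \frac{e^{-s\sqrt{x}}}{2\sqrt{x}}\,dx = \tf{1}{s}e^{-s\sqrt{\lambda_n}}$, which explains both the appearance of the prefactor $\tf{1}{s}$ on the left and the relevance of the $\nu = \tf12$ specialization $K_{1/2}(z) = \sqrt{\pi/(2z)}e^{-z}$ from \eqref{K}.

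On the left-hand side, after interchanging sum and integral, the substitution $t = \sqrt{x}$ converts $\tf{e^{-s\sqrt{x}}}{2\sqrt{x}}\,dx$ into $e^{-st}\,dt$ and turns the term into $\tf{1}{\rho!}\sum_{n=1}^{\infty}a(n)\int_{\sqrt{\lambda_n}}^\infty (t^2-\lambda_n)^\rho e^{-st}\,dt$. A single integration by parts in $t$ (the boundary term vanishing for $\rho \geq 1$) produces the recursion
\begin{equation*}
\tf{1}{\rho!}\int_{\sqrt{\lambda_n}}^\infty (t^2-\lambda_n)^\rho e^{-st}\,dt \;=\; -\tf{2}{s}\tf{d}{ds}\Bigl\{\tf{1}{(\rho-1)!}\int_{\sqrt{\lambda_n}}^\infty (t^2-\lambda_n)^{\rho-1}e^{-st}\,dt\Bigr\},
\end{equation*}
with base case $\rho = 0$ equal to $\tf{1}{s}e^{-s\sqrt{\lambda_n}}$. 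Iterating $\rho$ times identifies the left-hand side of the integrated identity with $2^\rho\left(-\tf{1}{s}\tf{d}{ds}\right)^\rho\!\bigl\{\tf{1}{s}\sum a(n)e^{-s\sqrt{\lambda_n}}\bigr\}$.

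On the right-hand side of Theorem \ref{thmriesz}, the same substitution $t = \sqrt{x}$ together with the representation $e^{-st} = \sqrt{2st/\pi}\,K_{1/2}(st)$ reduces each Bessel integral to an instance of Lemma \ref{watson} with $\mu = -\tf12$ and $\nu = \delta+\rho$, producing after simplification $2^{3\delta+2\rho}\pi^{\delta-1/2}\Gamma(\delta+\rho+\tf12)\sum b(n)(s^2+16\pi^2\mu_n)^{-\delta-\rho-1/2}$. For the error term, insert the Mellin representation \eqref{22} of $Q_\rho(x)$, interchange the contour and outer integrals, and evaluate $\int_0^\infty x^{z+\rho-1/2}e^{-s\sqrt{x}}\,dx/2 = \Gamma(2z+2\rho+1)s^{-2z-2\rho-1}$ by the same change of variables. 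Finally, divide both sides by $2^\rho$ and apply Legendre's duplication formula $\Gamma(2z+2\rho+1) = (2^{2z+2\rho}/\sqrt{\pi})\Gamma(z+\rho+\tf12)\Gamma(z+\rho+1)$ to cast the contour integrand in the form stated for $R_\rho(s)$.

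The main technical obstacle is justifying the interchange of summation and integration on both sides of Theorem \ref{thmriesz}. Under the hypothesis $\rho > 2\sigma_a^*-\delta-\tf12$, the Bessel series on the right is absolutely convergent because Lemma \ref{asymptotic} gives $(x/\mu_n)^{(\delta+\rho)/2}J_{\delta+\rho}(4\pi\sqrt{\mu_n x}) \ll \mu_n^{(\delta+\rho)/2-1/4}$, and the exponential decay of $e^{-s\sqrt{x}}$ for $\Re(s) > 0$ supplies the uniform integrability required for Fubini. Analogous arguments, together with polynomial bounds on $\chi(z)$ along vertical lines, justify the interchange of the contour integral defining $Q_\rho(x)$ with the outer integration, as well as the repeated differentiation in $s$ under the sum $\sum a(n)e^{-s\sqrt{\lambda_n}}$ that is used to identify the left-hand side.
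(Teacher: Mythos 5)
Your argument is correct, and it is worth pointing out that the paper itself does not prove this theorem at all: it is quoted from Chandrasekharan and Narasimhan \cite[p.~8, Lemma~6]{cn1}, with only the remark that it is similar in appearance to Theorem \ref{thm2} at $\nu=1/2$. Your proof supplies the missing derivation, and it does so by exactly the mechanism the paper uses for Theorem \ref{thm2}: integrate the Riesz-sum identity \eqref{21} against a Bessel-type kernel. Your kernel $e^{-s\sqrt{x}}/(2\sqrt{x})$ is, up to the factor $\tfrac12\sqrt{2s/\pi}$, the kernel $x^{\nu/2}K_{\nu}(s\sqrt{x})$ of Theorem \ref{thm2} at $\nu=-1/2$ (via \eqref{K} and $K_{-1/2}=K_{1/2}$), so the evaluation of the $b(n)$-series by Lemma \ref{watson} and of the residual term run in parallel with \eqref{24}--\eqref{27}; what you add beyond that template is the integration-by-parts recursion identifying the left-hand side with $2^{\rho}\left(-\tfrac{1}{s}\tfrac{d}{ds}\right)^{\rho}\bigl\{\tfrac{1}{s}\sum_{n}a(n)e^{-s\sqrt{\lambda_n}}\bigr\}$, which is the genuinely new ingredient and which you verify correctly (the boundary term $(t^2-\lambda_n)^{\rho}e^{-st}$ at $t=\sqrt{\lambda_n}$ vanishes for $\rho\ge1$, and the base case gives $e^{-s\sqrt{\lambda_n}}/s$). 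Two small blemishes. First, the bound you quote from Lemma \ref{asymptotic} should read $\ll_{x}\mu_n^{-(\delta+\rho)/2-1/4}$; your exponent has the wrong sign on the first term, and with the correct sign the absolute convergence you need is equivalent to $(\delta+\rho)/2+1/4>\sigma_a^*$, i.e.\ precisely the hypothesis $\rho>2\sigma_a^*-\delta-\tfrac12$, which is why that hypothesis appears. Second, the final appeal to Legendre's duplication formula is unnecessary: the contour integrand you obtain, $\chi(z)(2\pi)^z\Gamma(2z+2\rho+1)2^{-\rho}s^{-2z-2\rho-1}/\Gamma(z+\rho+1)$, already \emph{is} the stated form of $R_{\rho}(s)$, and applying the duplication formula would carry you away from it rather than toward it.
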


\section{Example 1: $r_k(n)$}

In the examples below we refer to calculations made  by Chandrasekharan and Narasimhan \cite{cn1} to illustrate Theorem \ref{thmriesz}.  In particular, we use a few of their determinations of $Q_{\rho}(x)$.

\bigskip

 Let $r_k(n)$ denote the number of representations of the positive integer $n$ as a sum of $k$ squares, where  $k\geq2$.  Then
$$\zeta_k(s):=\sum_{n=1}^{\infty}\df{r_k(n)}{n^s},\qquad \sigma>k/2,$$
satisfies the functional equation
\begin{equation}\label{functionalequation}
 \pi^{-s}\Gamma(s)\zeta_k(s)=\pi^{s-k/2}\Gamma(k/2-s)\zeta_k(k/2-s).
 \end{equation}
In the notation of \eqref{19}, 
\begin{equation*}
a(n)=b(n)=r_k(n), \qquad \delta=k/2, \qquad \text{and} \qquad \lambda_n=\mu_n=n/2.
\end{equation*}
 From the functional equation \eqref{functionalequation}, $\zeta_k(0)=-1$, and $\zeta_k(s)$ has a simple pole at $s=2k$ with residue $\pi^{k/2}/\Gamma(k/2)$. 
We now apply Theorem \ref{thm3}. First, from the preceding remarks, 
\begin{equation}\label{31}
Q_{0}(x)=-1+\df{(2\pi)^{k/2}x^{k/2}}{\Gamma(1+k/2)}.
\end{equation}
Second, we calculate the integral on the right side of \eqref{30}.  To that end,
\begin{align}\label{32}
I:=&\int_0^{\infty}\left(-1+\df{(2\pi)^{k/2}x^{k/2}}{\Gamma(1+k/2)}\right)x^{\nu/2}K_{\nu}(s\sqrt{x})dx\notag\\
=&2\int_0^{\infty}\left(-1+\df{(2\pi)^{k/2}t^{k}}{\Gamma(1+k/2)}\right)t^{\nu+1}K_{\nu}(st)dt\notag\\
=&:I_1+I_2.
\end{align}
Using Lemmas \ref{watson2}, \ref{asymptotic}, and \ref{limit} in order, we find that
\begin{align}\label{33}
I_1&=-2\int_0^{\infty}t^{\nu+1}K_{\nu}(st)dt\notag\\
&=-\df{2}{s^{\nu+2}}\int_0^{\infty}x^{\nu+1}K_{\nu}(x)dx\notag\\
&=\df{2}{s^{\nu+2}}\int_0^{\infty}\df{d}{dx}\left\{x^{\nu+1}K_{\nu+1}(x)\right\}dx\notag\\
&=-\df{2^{\nu+1}}{s^{\nu+2}}\Gamma(\nu+1).
\end{align}
 Secondly, using Lemma \ref{Kintegral}, we find that
\begin{align}
I_2=&\df{2(2\pi)^{k/2}}{\Gamma(1+k/2)}\int_0^{\infty}t^{k+\nu+1}K_{\nu}(st)dt\notag\\
=&\df{2(2\pi)^{k/2}}{s^{k+\nu+2}\Gamma(1+k/2)}\int_0^{\infty}x^{k+\nu+1}K_{\nu}(x)dx\notag\\
=&\df{2^{3k/2+\nu+1}\pi^{k/2}}{s^{k+\nu+2}}\Gamma(\nu+1+k/2).
\label{34}
\end{align}
Putting \eqref{33} and \eqref{34} in \eqref{32}, we conclude that
\begin{equation}\label{35}
I=-\df{2^{\nu+1}}{s^{\nu+2}}\Gamma(\nu+1)+\df{2^{3k/2+\nu+1}\pi^{k/2}}{s^{k+\nu+2}}\Gamma(\nu+1+k/2).
\end{equation}

Using \eqref{35} in Theorem \ref{thm3}, we deduce that
\begin{gather}
\df{2}{s}\sum_{n=1}^{\infty}r_k(n)\left(\df{n}{2}\right)^{(\nu+1)/2}K_{\nu+1}\left(s\sqrt{\df{n}{2}}\right)
+\df{2^{\nu+1}}{s^{\nu+2}}\Gamma(\nu+1)\notag\\
=\df{2^{3k/2+\nu+1}\pi^{k/2}}{s^{k+\nu+2}}\Gamma(\nu+1+k/2)
+2^{3k/2+\nu+1}s^{\nu}\pi^{k/2}\Gamma(\nu+1+k/2)\sum_{n=1}^{\infty}\df{r_k(n)}{(s^2+8\pi^2n)^{k/2+\nu+1}}.
\label{36}
\end{gather}
Now, let $s=2^{3/2}\pi\sqrt{\beta}.$ We therefore write \eqref{36} as
\begin{align}
&\df{1}{\pi\sqrt{2\beta}}\sum_{n=1}^{\infty}r_k(n)\left(\df{n}{2}\right)^{(\nu+1)/2}
K_{\nu+1}\left(2\pi\sqrt{n\beta}\right)
+\df{2^{\nu+1}}{(2^{3/2}\pi\sqrt{\beta})^{\nu+2}}\Gamma(\nu+1)\notag\\
=&
\df{2^{3k/2+\nu+1}\pi^{k/2}\Gamma(\nu+1+k/2)}{(2^{3/2}\pi\sqrt{\beta})^{k+\nu+2}}\notag\\
&+\df{2^{3k/2+\nu+1}(2^{3/2}\pi\sqrt{\beta})^{\nu}\pi^{k/2}\Gamma(\nu+1+k/2)}{(8\pi^2)^{k/2+\nu+1}}
\sum_{n=1}^{\infty}\df{r_k(n)}{(\beta+n)^{k/2+\nu+1}}.
\label{37}
\end{align}
Multiplying both sides of \eqref{37} by $2^{1+\nu/2}\pi\sqrt{\beta}$ and simplifying, we arrive at
\begin{gather}
\sum_{n=1}^{\infty}r_k(n)n^{(\nu+1)/2}
K_{\nu+1}\left(2\pi\sqrt{n\beta}\right)+\df{\Gamma(\nu+1)}{2\pi^{\nu+1}\beta^{(\nu+1)/2}}\notag\\
=\df{\beta^{(\nu+1)/2}\Gamma(\nu+1+k/2)}{2\pi^{k/2+\nu+1}\beta^{k/2+\nu+1}}
+\df{\beta^{(\nu+1)/2}\Gamma(\nu+1+k/2)}{2\pi^{k/2+\nu+1}}
\sum_{n=1}^{\infty}\df{r_k(n)}{(\beta+n)^{k/2+\nu+1}}.
\label{38}
\end{gather}
If we define $r_k(0)=1$ and formally use Lemma \ref{limit}, we find that
\begin{equation*}
\lim_{n\to 0}r_k(0)n^{(\nu+1)/2}K_{\nu}(2\pi\sqrt{n\beta})=\df{\Gamma(\nu+1)}{2\pi^{\nu+1}\beta^{(\nu+1)/2}}.
\end{equation*}
Hence, we see that we can put \eqref{38} in the form
\begin{equation}\label{39}
\sum_{n=0}^{\infty}r_k(n)n^{(\nu+1)/2}
K_{\nu+1}\left(2\pi\sqrt{n\beta}\right)=
\df{\beta^{(\nu+1)/2}\Gamma(\nu+1+k/2)}{2\pi^{k/2+\nu+1}}
\sum_{n=0}^{\infty}\df{r_k(n)}{(\beta+n)^{k/2+\nu+1}}.
\end{equation}
From the fact that $r_k(n)=O_{k}\left(n^{k/2-1+\epsilon}\right)$ for every $\epsilon>0$, it is clear that the identity \eqref{39} is valid for Re$(\nu)>-1$. The identity \eqref{39} was originally proved by A.~I.~Popov \cite[Eq.~(6)]{popov1935}.  It was also established in \cite[p.~329, Corollary 4.6]{bdkz} by a completely different method. The special case $k=2$ of this identity was obtained Hardy \cite{hardy}, who used it to prove his famous $\Omega$-theorem for the Gauss circle problem,

\section{Example 2: $\sigma_k(n)$}

Let $\sigma_k(n)$ denote the sum of the $k$th powers of the divisors of $n$, where it is assumed that $k$ is an odd positive integer.  The generating function for $\sigma_k(n)$ is given by
\begin{equation}\label{40a}
\zeta_k(s):=\zeta(s)\zeta(s-k)=\sum_{n=1}^{\infty}\df{\sigma_k(n)}{n^s}, \qquad \sigma > k+1,
\end{equation}
and it satisfies the functional equation
\begin{equation}\label{41a}
(2\pi)^{-s}\Gamma(s)\zeta_k(s)=(-1)^{(k+1)/2}(2\pi)^{-(k+1-s)}\Gamma(k+1-s)\zeta_k(k+1-s).
\end{equation}
In the notation of the  Dirichlet series and functional equation in \eqref{18} and \eqref{19}, respectively,
 $$  a(n)=\sigma_k(n), \quad b(n)=(-1)^{(k+1)/2}\sigma_k(n), \quad\lambda_n=\mu_n=n,\quad \delta=k+1.$$

Assuming that $\Re(\nu), \Re(s) >0$, apply Theorem \ref{thm3}.  First, the left-hand side of \eqref{30} is equal to
\begin{equation}\label{42a}
\df{2}{s}\sum_{n=1}^{\infty}\sigma_k(n)n^{(\nu+1)/2}K_{\nu+1}(s\sqrt{n}).
\end{equation}
The first expression on the right-hand side of \eqref{30} is readily seen to equal
\begin{equation}\label{43a}
2^{3k+\nu+4}s^{\nu}\pi^{k+1}\Gamma(k+\nu+2)\sum_{n=1}^{\infty}\df{(-1)^{(k+1)/2}\sigma_k(n)}{(s^2+16\pi^2n)^{k+\nu+2}}.
\end{equation}
It remains to evaluate the integral on the right-hand side of \eqref{30}.

Now $Q_0(s)$ is the sum of the residues of \cite[p.~17]{cn1}
\begin{equation}\label{44a}
R(z):=\df{\Gamma(z)\zeta(z)\zeta(z-k)x^{z}}{\Gamma(z+1)}.
\end{equation}
(In Chandrasekaran and Narasimhan's paper \cite{cn1}, they utilize a different convention for Bernoulli numbers, and so our representation for $Q_0$ takes a different form from theirs.)
Observe that $R(z)$ has simple poles at $z=0,-1,k+1$. Using Euler's formula,
$$\zeta(2n)=(-1)^{n-1}\df{(2\pi)^{2n}B_{2n}}{2(2n)!}, $$
where $n$ is a positive integer and $B_{n}$ denotes the $n$th Bernoulli number, we readily find that
\begin{equation}\label{45a}
Q_0(x)=\df{B_{k+1}}{2(k+1)}-\df{\delta_{1,k}x}{2}
+\df{(2\pi)^{k+1}(-1)^{(k-1)/2}B_{k+1}x^{k+1}}{2(k+1)\Gamma(k+2)},
\end{equation}
  where
  \begin{equation*}
  \delta_{1,k}=\begin{cases}
  1, \quad \text{ if } k=1,\\
  0, \quad \text{otherwise}.
  \end{cases}
  \end{equation*}
Thus,
\begin{align}
&\int_{0}^{\infty}Q_0(x)x^{\nu/2}K_{\nu}(s\sqrt{x})dx\notag\\
=&\int_{0}^{\infty}\left\{\df{B_{k+1}}{2(k+1)}-\df{\delta_{1,k}x}{2}
+\df{(2\pi)^{k+1}(-1)^{(k-1)/2}B_{k+1}x^{k+1}}{2(k+1)\Gamma(k+2)}\right\}x^{\nu/2}K_{\nu}(s\sqrt{x})dx\notag\\
=&:I_1+I_2+I_3.\label{46}
\end{align}
First, as in \eqref{33}, we find that
\begin{equation}\label{47}
I_1=\df{2^{\nu}B_{k+1}\Gamma(\nu+1)}{(k+1)s^{\nu+2}}.
\end{equation}
Secondly, with the use of Lemma \ref{Kintegral},
\begin{align}\label{48}
I_2=&-\df{\delta_{1,k}}{2}\int_0^{\infty}x^{1+\nu/2}K_{\nu}(s\sqrt{x})dx\notag\\
&=-\df{\delta_{1,k}}{s^{\nu+4}}\int_0^{\infty}t^{\nu+3}K_{\nu}(t)dt\notag\\
&=-\df{\delta_{1,k}}{s^{\nu+4}}2^{\nu+2}\Gamma(\nu+2).
\end{align}
Thirdly, employing Lemma \ref{Kintegral}, we deduce that
\begin{align}\label{49}
I_3=&\df{(2\pi)^{k+1}(-1)^{(k-1)/2}B_{k+1}}{2(k+1)\Gamma(k+2)}\int_{0}^{\infty}x^{k+1+\nu/2}K_{\nu}(s\sqrt{x})dx\notag\\
=&\df{(2\pi)^{k+1}(-1)^{(k-1)/2}B_{k+1}}{(k+1)\Gamma(k+2)s^{2k+\nu+4}}\int_{0}^{\infty}t^{2k+\nu+3}K_{\nu}(t)dt\notag\\
=&\df{2^{3k+\nu+3}\pi^{k+1}(-1)^{(k-1)/2}B_{k+1}}{(k+1)s^{2k+\nu+4}}\Gamma(k+\nu+2).
\end{align}

Finally, put \eqref{47}--\eqref{49} in \eqref{46}; then substitute \eqref{42a}, \eqref{43a}, and \eqref{46}
into Theorem \ref{thm3}; lastly multiply both sides by $s/2$ to conclude that
\begin{align}
\sum_{n=1}^{\infty}\sigma_k(n)n^{(\nu+1)/2}K_{\nu+1}(s\sqrt{n})
=&2^{3k+\nu+3}s^{\nu+1}\pi^{k+1}\Gamma(k+\nu+2)
\sum_{n=1}^{\infty}\df{(-1)^{(k+1)/2}\sigma_k(n)}{(s^2+16\pi^2n)^{k+\nu+2}}
\notag\\
&+\df{2^{{\nu}-1}B_{k+1}}{(k+1)s^{\nu+1}}\Gamma(\nu+1)
-\df{\delta_{1,k}}{s^{\nu+3}}2^{\nu+1}\Gamma(\nu+2)\notag\\
&+\df{2^{3k+\nu+2}\pi^{k+1}(-1)^{(k-1)/2}B_{k+1}}{(k+1)s^{2k+\nu+3}}\Gamma(k+\nu+2).\label{49b}
\end{align}

We now put \eqref{49b} in a more palatable form.  From \eqref{41a},
$$\zeta_k(0)=\zeta(0)\zeta(-k)=-\df12\cdot-\df{B_{k+1}}{k+1}=\df{B_{k+1}}{2(k+1)},$$
by \cite[p.~12]{edwards}.
Define
\begin{equation}\label{49cc}
\sigma_k(0)=-\zeta_k(0)=-\df{B_{k+1}}{2(k+1)}.
\end{equation}
Next, define a term for $n=0$ on the left-hand side of \eqref{49b} by formally using Lemma \ref{limit}.  Therefore, with also the use of \eqref{49cc},
\begin{align}
\lim_{n\to 0}\sigma_k(0)n^{(\nu+1)/2}K_{\nu+1}(s\sqrt{n})
=&-\df{2^{\nu-1}B_{k+1}\Gamma(\nu+1)}{(k+1)s^{\nu+1}}.\label{49c}
\end{align}
 Hence, utilizing \eqref{49c} in \eqref{49b}, we have shown that
\begin{align}
&\qquad\qquad\sum_{n=0}^{\infty}\sigma_k(n)n^{(\nu+1)/2}K_{\nu+1}(s\sqrt{n})\notag\\
=&2^{3k+\nu+3}s^{\nu+1}\pi^{k+1}\Gamma(k+\nu+2)
\sum_{n=1}^{\infty}\df{(-1)^{(k+1)/2}\sigma_k(n)}{(s^2+16\pi^2n)^{k+\nu+2}}
\notag\\
&-\df{\delta_{1,k}}{s^{\nu+3}}2^{\nu+1}\Gamma(\nu+2)
+\df{2^{3k+\nu+2}\pi^{k+1}(-1)^{(k-1)/2}B_{k+1}}{(k+1)s^{2k+\nu+3}}\Gamma(k+\nu+2).\label{49d}
\end{align}
Note that the term for $n=0$ on the right-hand side of \eqref{49d} is equal to
\begin{align}\label{49e}
&\df{2^{3k+\nu+3}s^{\nu+1}\pi^{k+1}\Gamma(k+\nu+2)(-1)^{(k+1)/2}\sigma_k(0)}{s^{2k+2\nu+4}}\notag\\
=&\df{2^{3k+\nu+2}\pi^{k+1}\Gamma(k+\nu+2)(-1)^{(k-1)/2}B_{k+1}}{(k+1)s^{2k+\nu+3}},
\end{align}
by \eqref{49cc}.  Using \eqref{49e} in \eqref{49d}, we conclude that
\begin{align*}
&\sum_{n=0}^{\infty}\sigma_k(n)n^{(\nu+1)/2}K_{\nu+1}(s\sqrt{n})
+\df{\delta_{1,k}}{s^{\nu+3}}2^{\nu+1}\Gamma(\nu+2)\notag\\
=&2^{3k+\nu+3}s^{\nu+1}\pi^{k+1}\Gamma(k+\nu+2)
\sum_{n=0}^{\infty}\df{(-1)^{(k+1)/2}\sigma_k(n)}{(s^2+16\pi^2n)^{k+\nu+2}}.
\end{align*}
Note that the elementary bound $\sigma_k(n)=O(n^{k+\epsilon})$ implies that the identity above is actually valid for Re$(\nu)>-1$. Letting $\nu=-1/2$ and using \eqref{K} leads to the following result of Chandrasekharan and Narasimhan \cite[Equation (60)]{cn1}:
\begin{align}
\sum_{n=1}^{\infty}\sigma_k(n)e^{-s\sqrt{n}}&=2^{3k+3}\Gamma\left(k+\frac{3}{2}\right)\pi^{k+\frac{1}{2}}\sum_{n=1}^{\infty}\frac{s(-1)^{(k+1)/2}\sigma_k(n)}{(s^2+16\pi^2n)^{k+3/2}}\nonumber\\
&\quad+\frac{B_{k+1}}{2(k+1)}-\frac{\delta_{1,k}}{s^2}+\frac{2^{3k+2}\pi^{k+\frac{1}{2}}(-1)^{(k-1)/2}B_{k+1}}{(k+1)s^{2k+2}}\Gamma\left(k+\frac{3}{2}\right).
\end{align}

\section{Example 3: $\tau(n)$}

Recall that the Dirichlet series for Ramanujan's arithmetical function $\tau(n)$
\begin{equation*}
f(s):=\sum_{n=1}^{\infty}\df{\tau(n)}{n^s}, \quad \sigma > \df{13}{2},
\end{equation*}
satisfies the functional equation
\begin{equation}\label{50}
\chi(s):=(2\pi)^{-s}\Gamma(s)f(s)=(2\pi)^{-(12-s)}\Gamma(12-s)f(12-s).
\end{equation}
The function $\chi(s)$ is an entire function, and so $Q_0(x)\equiv0$.  Clearly, $\lambda_n=\mu_n=n$ and $\delta=12$.  Thus, for $\Re(\nu), \Re(s)>0$, from Theorem \ref{thm3} we can immediately deduce the identity
\begin{equation}\label{51}
\sum_{n=1}^{\infty}\tau(n)n^{(\nu+1)/2}K_{\nu+1}(s\sqrt{n})=
2^{36+\nu}s^{\nu+1}\pi^{12}\Gamma(13+\nu)\sum_{n=1}^{\infty}\df{\tau(n)}{(s^2+16\pi^2n)^{\nu+13}}.
\end{equation}

Let $\nu=-\tf12$.  Then, from \eqref{K},
\begin{equation}\label{52}
K_{1/2}(s\sqrt{n})=\left(\df{\pi}{2s\sqrt{n}}\right)^{1/2}e^{-s\sqrt{n}}.
\end{equation}
Hence, by \eqref{51} and \eqref{52},
\begin{equation}\label{53}
\sum_{n=1}^{\infty}\tau(n)e^{-s\sqrt{n}}
=2^{36}\pi^{23/2}\Gamma\left(\frac{25}{2}\right)\sum_{n=1}^{\infty}\df{s\tau(n)}{(s^2+16\pi^2n)^{25/2}}.
\end{equation}
The identity \eqref{53} is originally due to Chandrasekharan and Narasimhan \cite[p.~16, Eq.~(56)]{cn1}.

\section{A theorem of G.~N.~Watson}
In this section, we obtain a theorem of Watson \cite[Equation (4)]{watsonselfreciprocal} as a special case of Theorem \ref{thm3}, namely, for Re$(z)>0$ and Re$(\nu)>0$,
\begin{align}\label{watsoni}
\frac{1}{2}\Gamma(\nu)&+2\sum_{n=1}^{\infty}\left(\frac{1}{2}nz\right)^{\nu}K_{\nu}(nz)\nonumber\\
&=\Gamma\left(\frac{1}{2}\right)\Gamma\left(\nu+\frac{1}{2}\right)z^{2\nu}\left\{\frac{1}{z^{2\nu+1}}+2\sum_{n=1}^{\infty}\frac{1}{(z^2+4\pi^2n^2)^{\nu+1/2}}\right\}.    
\end{align}
The functional equation of the Riemann zeta function is given by \cite[p.~14]{edwards}
\begin{equation}
\pi^{-s/2}\Gamma(s/2)\zeta(s)=\pi^{-(1-s)/2}\Gamma((1-s)/2)\zeta(1-s).
\end{equation}
Hence, replacing $s$ by $2s$, we see that it can be converted into the form in \eqref{19} with $\delta=1/2$. Therefore, we invoke Theorem \ref{thm3} with $a(n)=b(n)=1, \lambda_n=\mu_n=n^2/2$, whence
\begin{align}\label{w1}
\frac{2}{s}\sum_{n=1}^{\infty}(n/\sqrt{2})^{\nu+1}K_{\nu+1}(sn/\sqrt{2})&=2^{\nu+5/2}s^{\nu}\pi^{1/2}\Gamma(\nu+3/2)\sum_{n=1}^{\infty}\frac{1}{(s^2+8\pi^2n^2)^{\nu+3/2}}\nonumber\\
&\quad+\int_{0}^{\infty}Q_0(x)x^{\nu/2}K_{\nu}(s\sqrt{x})\, dx.
\end{align}
Here $Q_0(x)=-1/2+\sqrt{2x}$, the sum of the residues of $\zeta(2z)(2x)^{z}/z$ at $0$ and $1/2$, so that 
\begin{align}\label{w2}
\int_{0}^{\infty}Q_0(x)x^{\nu/2}K_{\nu}(s\sqrt{x})\, dx=2^{\nu+3/2}\sqrt{\pi}s^{-\nu-3}\Gamma(\nu+3/2)-2^{\nu}s^{-\nu-2}\Gamma(1+\nu).    
\end{align}
From \eqref{w1} and \eqref{w2},
\begin{align}
\frac{2^{(1-\nu)/2}}{s}\sum_{n=1}^{\infty}(n/\sqrt{2})^{\nu+1}K_{\nu+1}(sn/\sqrt{2})&=2^{\nu+5/2}s^{\nu}\pi^{1/2}\Gamma(\nu+3/2)\sum_{n=1}^{\infty}\frac{1}{(s^2+8\pi^2n^2)^{\nu+3/2}}\nonumber\\
&\quad+2^{\nu+3/2}\sqrt{\pi}s^{-\nu-3}\Gamma(\nu+3/2)-2^{\nu}s^{-\nu-2}\Gamma(1+\nu). 
\end{align}
Next let $s=z\sqrt{2}$ in the foregoing equation, then multiply the resulting identity by $2^{-\nu/2}z^{\nu+2}$, and replace $\nu$ by $\nu-1$ to arrive at \eqref{watsoni} upon simplification.
\section{Primitive characters $\chi(n)$}
Let $\chi$ denote a primitive character modulo $q$.  Because the functional equations for the Dirichlet $L$-series
$$L(s,\chi)=\sum_{n=1}^{\infty}\df{\chi(n)}{n^s}, \qquad \sigma >0,$$
are different for $\chi$ even and $\chi$ odd, we separate the two cases.

Suppose first that $\chi$ is odd.  Then the functional equation for $L(s,\chi)$ is given by \cite[p.~71]{davenport}
\begin{equation}\label{funcequa1}
\chi(s):=\left(\df{\pi}{q}\right)^{-s}\Gamma(s)L(2s-1,\chi)=
-\df{i\tau(\chi)}{\sqrt{q}}\left(\df{\pi}{q}\right)^{-(\tf32-s)}\Gamma\left(\tf32-s\right)L(2-2s,\overline{\chi}),
\end{equation}
where $\overline{\chi}(n)$ denotes the complex conjugate of $\chi(n)$, and $\tau(\chi)$ denotes the Gauss sum
$$\tau(\chi):=\sum_{n=1}^{q}\chi(n)e^{2\pi in/q}.$$
Hence, in the notation of \eqref{18} and \eqref{19},
$$a(n)=n\chi(n),\quad b(n)=-\df{i\tau(\chi)}{\sqrt{q}}n\overline{\chi}(n), \quad \lambda_n=\mu_n=\df{n^2}{2q},
\quad \delta=\df32.$$
 Also, $\chi(s)$ is an entire function, and consequently $Q_0(x)\equiv0$.  Hence, by Theorem \ref{thm3}, for $\Re(\nu), \Re(s)>0$,
\begin{gather}
\df{2}{s}\sum_{n=1}^{\infty}n\chi(n)
\left(\df{n^2}{2q}\right)^{(\nu+1)/2}K_{\nu+1}\left(s\sqrt{\df{1}{2q}}n\right)\notag\\
=-\df{i\tau(\chi)}{\sqrt{q}}2^{\nu+11/2}s^{\nu}\pi^{3/2}\Gamma\left(\nu+\tfrac52\right)
\sum_{n=1}^{\infty}\df{n\overline{\chi}(n)}{\left(s^2+8\pi^2n^2/q\right)^{\nu+5/2}}.\label{54}
\end{gather}
Now multiply both sides of \eqref{54} by
$\frac{1}{2}s\left(2q\right)^{(\nu+1)/2}$, and then let
$s=\sqrt{2q}\,r$
to deduce that
\begin{equation}
\sum_{n=1}^{\infty}\chi(n)
n^{\nu+2}K_{\nu+1}(rn)
=-i\tau(\chi)
\df{r^{\nu+1}q^{2\nu+3}\Gamma\left(\nu+\tfrac52\right)}{2^{\nu+2}\pi^{2\nu+7/2}}
\sum_{n=1}^{\infty}\df{n\overline{\chi}(n)}{\left(n^2+q^2r^2/(4\pi^2)\right)^{\nu+5/2}}.\label{55a}
\end{equation}

Second, let $\chi$ be even.  Then the functional equation of $L(s,\chi)$ is given by \cite[p.~69]{davenport}
 \begin{equation}\label{funcequa2}
\chi(s):=\left(\df{\pi}{q}\right)^{-s}\Gamma(s)L(2s,\chi)=
\df{\tau(\chi)}{\sqrt{q}}\left(\df{\pi}{q}\right)^{-(\tf12-s)}\Gamma\left(\tf12-s\right)L(1-2s,\overline{\chi}),
\end{equation}
Hence, by \eqref{18} and \eqref{19},
$$a(n)=\chi(n),\quad b(n)=\df{\tau(\chi)}{\sqrt{q}}\overline{\chi}(n), \quad \lambda_n=\mu_n=\df{n^2}{2q},
\quad \delta=\df12.$$
Also, $\chi(s)$ is an entire function, and consequently $Q_0(x)\equiv0$.  Hence, by Theorem \ref{thm3}, for $\Re(\nu), \Re(s)>0$,
\begin{gather}
\df{2}{s}\sum_{n=1}^{\infty}\chi(n)
\left(\df{n^2}{2q}\right)^{(\nu+1)/2}K_{\nu+1}\left(s\sqrt{\df{1}{2q}}n\right)\notag\\
=\df{\tau(\chi)}{\sqrt{q}}2^{\nu+5/2}s^{\nu}\pi^{1/2}\Gamma\left(\nu+\tfrac32\right)
\sum_{n=1}^{\infty}\df{\overline{\chi}(n)}{\left(s^2+8\pi^2n^2/q\right)^{\nu+3/2}}.\label{56}
\end{gather}
Multiply both sides of \eqref{54} by $\tf12 s(2q)^{(\nu+1)/2}$ and then let $s=\sqrt{2q}\,r$
to obtain
\begin{gather}
\sum_{n=1}^{\infty}\chi(n)n^{\nu+1} K_{\nu+1}(rn)
=\tau(\chi)
\df{r^{\nu+1}q^{2(\nu+1)}\Gamma\left(\nu+\tfrac32\right)}{2^{\nu+2}\pi^{2\nu+5/2}}
\sum_{n=1}^{\infty}\df{\overline{\chi}(n)}{\left(n^2+q^2r^2/(4\pi^2)\right)^{\nu+3/2}}.\label{57a}
\end{gather}

Identities \eqref{55a} and \eqref{57a} were first obtained in \cite[Theorem 2.1]{bds} and are character analogues of \eqref{watsoni}.

\section{Ideal Functions $F(n)$ of Imaginary Quadratic Fields}

Let $F(n)$ denote the number of integral ideals of norm $n$ in an imaginary quadratic number  field $K=\mathbb{Q}\left(\sqrt{-D}\right)$, where $D$ is the discriminant of $K$.  Then the Dedekind zeta function 
$$\zeta_{K}(s):=\sum_{n=1}^{\infty}\df{F(n)}{n^s}, \qquad \sigma>1,$$
satisfies the functional equation
\begin{equation}\label{dedekindk}
\left(\df{2\pi}{\sqrt{D}}\right)^{-s}\Gamma(s)\zeta_K(s)
=\left(\df{2\pi}{\sqrt{D}}\right)^{s-1}\Gamma(1-s)\zeta_K(1-s).
\end{equation}
We note from \eqref{18} and \eqref{19} that
$$ a(n)=b(n)=F(n), \qquad \lambda_n=\mu_n=n/\sqrt{D}, \qquad \delta=1.$$
The function $\zeta_K(s)$ has an analytic continuation to the entire complex plane where it is analytic except for a simple pole at $s=1$.  From \cite[p.~212]{cohen},
\begin{equation}\label{pole}
\lim_{s\to 1}(s-1)\zeta_K(s)=\df{2\pi h(K)R(K)}{w(K)\sqrt{D}},
\end{equation}
where $h(K), R(K)$, and $w(K)$ denote, respectively, the class number of $K$, the regulator of $K$, and the number of roots of unity in $K$. 
Furthermore, from \eqref{dedekindk} and \eqref{pole},
\begin{equation}\label{zerovalue}
\zeta_K(0)=\lim_{s\to 0}\df{\sqrt{D}}{2\pi}\cdot\df{1}{s\Gamma(s)}\cdot s\zeta_K(1-s)
=\df{\sqrt{D}}{2\pi}\cdot-\df{2\pi h(K)R(K)}{w(K)\sqrt{D}}=-\df{h(K)R(K)}{w(K)}.
\end{equation}
For simplicity, set $d=\sqrt{D},  h=h(K), R=R(K)$, and $w=w(K)$.
From \eqref{zerovalue} and \eqref{pole},
\begin{align}
Q_0(x)=&\df{1}{2\pi i}\int_{\mathcal{C}}\df{\Gamma(z)}{\Gamma(z+1)}d^z
\zeta_{K}(z)x^zdz\notag\\
=&-\df{hR}{w}+\df{2\pi hRx}{w}.\label{70}
\end{align}

We apply Theorem \ref{thm3}.  First, we calculate the integral on the right-hand side of \eqref{30}. To that end, by \eqref{70} and \eqref{zerovalue},
\begin{align}\label{60}
I:=&\int_0^{\infty}Q_0(x)x^{\nu/2}K_{\nu}(s\sqrt{x})dx\notag\\
=&-\df{hR}{w}\int_0^{\infty}x^{\nu/2}K_{\nu}(s\sqrt{x})dx
+\df{2\pi hR}{w}\int_0^{\infty}x^{\nu/2+1}K_{\nu}(s\sqrt{x})dx
\notag\\
=&-\df{hR}{w}\df{2^{\nu+1}}{s^{\nu+2}}\Gamma(\nu+1)
+\df{2\pi hR}{w}\df{2^{\nu+3}}{s^{\nu+4}}\Gamma(\nu+2),
\end{align}
by Lemmas \ref{23} and \ref{Kintegral}.  Hence, with the use of \eqref{60}, Theorem \ref{thm3} yields
\begin{align}
\sum_{n=1}^{\infty}F(n)(n/d)^{(\nu+1)/2}&K_{\nu+1}(s\sqrt{n/d})
=2^{\nu+3}s^{\nu+1}\pi\Gamma(\nu+2)\sum_{n=1}^{\infty}\df{F(n)}{(s^2+16\pi^2n/d)^{\nu+2}}\notag\\
&-\df{hR}{w}\df{2^{\nu}}{s^{\nu+1}}\Gamma(\nu+1)
+\df{2\pi hR}{wd}\df{2^{\nu+2}}{s^{\nu+3}}\Gamma(\nu+2).
\label{61}
\end{align}
Let $s=4\pi\sqrt{r/d}$ and multiply both sides by $d^{(\nu+1)/2}$.  Hence,
\begin{align}
&\sum_{n=1}^{\infty}F(n)n^{(\nu+1)/2}K_{\nu+1}(4\pi \sqrt{rn}/d)
=\df{1}{2\sqrt{r}}\left(\frac{d\sqrt{r}}{2\pi}\right)^{\nu+2}\Gamma(\nu+2)\sum_{n=1}^{\infty}\df{F(n)}{(r+n)^{\nu+2}}\notag\\
&-\df{hR}{2w}\left(\df{d}{2\pi\sqrt{r}}\right)^{\nu+1}\Gamma(\nu+1)
+\df{hR}{2w\sqrt{r}}\left(\df{d}{2\pi\sqrt{r}}\right)^{\nu+2}\Gamma(\nu+2).\label{63}
\end{align}

From a formal use of Lemma \ref{limit},
\begin{equation}\label{64}
\lim_{n\to 0}n^{(\nu+1)/2}K_{\nu+1}\left(\frac{4\pi \sqrt{rn}}{d}\right)=\df{1}{2}\left(\frac{d}{2\pi \sqrt{r}}\right)^{\nu+1}\Gamma(\nu+1).
\end{equation}
Hence, if we define $F(0)=hR/w$ and then note that $F(0)$ multiplied by the right side of \eqref{64} appears on the right side of \eqref{63}, we can rewrite \eqref{63} in the form
\begin{align*}
\sum_{n=0}^{\infty}F(n)n^{(\nu+1)/2}K_{\nu+1}(4\pi\sqrt{rn}/d)
=&\df{1}{2\sqrt{r}}\left(\frac{d\sqrt{r}}{2\pi}\right)^{\nu+2}\Gamma(\nu+2)\sum_{n=0}^{\infty}
\df{F(n)}{(r+n)^{\nu+2}}.
\end{align*}
From \cite[Lemma 9]{cmapproximate}, we see that $F(n)=O(n^{\epsilon})$ for every $\epsilon>0$.  Hence the foregoing identity is actually valid for Re$(\nu)>-1$. Hence, letting $\nu=-1/2$, we obtain the special case 
\begin{equation*}
\sum_{n=0}^{\infty}F(n)e^{-4\pi\sqrt{rn}/d}=\frac{d^{3/2}\sqrt{r}}{4\pi}\sum_{n=0}^{\infty}
\df{F(n)}{(r+n)^{3/2}}.
\end{equation*}

\begin{center}
\textbf{Acknowledgements}
\end{center}

The first and second authors sincerely thank the MHRD SPARC project SPARC/2018-2019/P567/SL for the financial support.

\end{document}